\newcommand{\Z}[1]{\mathbb{Z}^{#1}}
\newcommand{\Fg}[1]{\mathbb{F}_{#1}}
\newcommand{\N}{\mathbb{N}}
\newcommand{\A}{\mathcal A}
\DeclareMathOperator{\supp}{supp}
\DeclareMathOperator{\orb}{orb}
\DeclareMathOperator{\stab}{stab}
\theoremstyle{plain}
\newtheorem{theorem}{Theorem}[section]
\newtheorem{corollary}[theorem]{Corollary}
\newtheorem{proposition}[theorem]{Proposition}
\newtheorem{conjecture}[theorem]{Conjecture}
\theoremstyle{definition}
\newtheorem{definition}[theorem]{Definition}
\theoremstyle{remark}
\newtheorem{remark} [theorem]{Remark}
\newtheorem{example}[theorem]{Example}
\title{Necessary conditions for tiling finitely generated amenable groups}
\author{Benjamin Hellouin de Menibus\footnote{This article was written during stays funded by an LRI internal project.}\\ Laboratoire de Recherche en Informatique \\ Université Paris-Sud - CNRS - CentraleSupélec,\\ Université Paris-Saclay, France\\ \raisebox{-2pt}{\includegraphics[height=12pt]{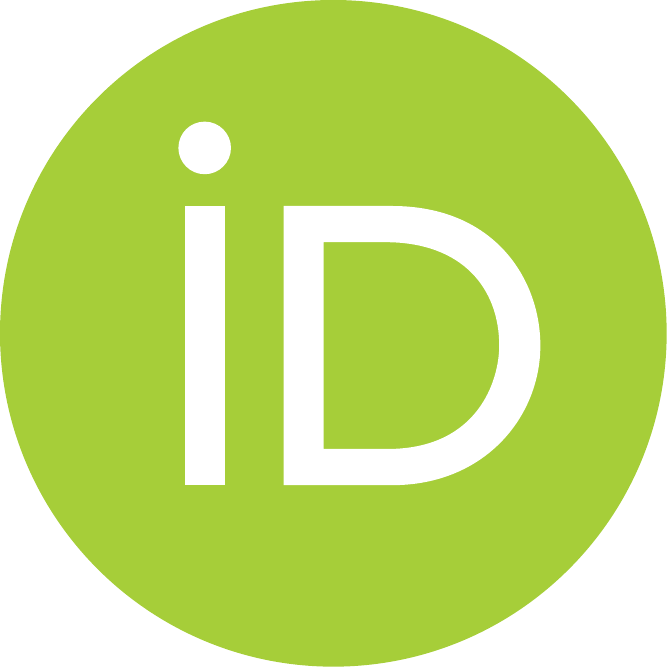}} \url{https://orcid.org/0000-0001-5194-929X}\\\url{hellouin@lri.fr}\\~\\
Hugo Maturana Cornejo\footnote{The second author would like to thank Nathalie Aubrun for the support offered during his stay at ENS Lyon. This article was partially funded by the ECOS-SUD project C17E08,
the ANR project CoCoGro (ANR-16-CE40-0005) and CONICYT doctoral fellowship 21170770.}\\
Departamento de ingeniería matemática, DIM-CMM\\Universidad de Chile\\\url{hmaturana@dim.chile.cmm}}
\begin{document}
\maketitle
\begin{abstract}
We consider a set of necessary conditions which are efficient heuristics for deciding when a set of Wang tiles cannot tile a group. 

Piantadosi \cite{SP08} gave a necessary and sufficient condition for the existence of a valid tiling of any free group. This condition is actually necessary for the existence of a valid tiling for an arbitrary finitely generated group.

We then consider two other conditions: the first, also given by Piantadosi \cite{SP08}, is a necessary and sufficient condition to decide if a set of Wang tiles gives a strongly periodic tiling of the free group; the second, given by Chazottes et. al. \cite{CG14}, is a necessary condition to decide if a set of Wang tiles gives a tiling of $\mathbb Z^2$.

We show that these last two conditions are equivalent. Joining and generalising approaches from both sides, we prove that they are necessary for having a valid tiling of any finitely generated amenable group, confirming a remark of Jeandel \cite{EJ15b}.\medskip

MCS classification : Primary 37B50; Secondary 37B10, 05B45.\medskip

Keywords : Symbolic dynamics, tilings, groups, periodicity, amenability, domino problem
\end{abstract}

\section{Introduction}

$\Z{2}$-subshifts of finite type (SFT) are a set of colorings of the $2$-dimensional lattice $\Z2$, or \emph{tilings}, defined by a finite set of local restrictions. There are various equivalent ways to express the restrictions, such as the Wang tiles formalism introduced by Hao Wang \cite{WH61}. This formalism was introduced to study the \emph{domino problem}: given as input a set of restrictions (e.g. a set of Wang tiles), is there an algorithm that decides whether there is a tiling of $\Z2$ that respects those restrictions? 

R. Berger \cite{BER66} showed that the domino problem is undecidable. The proof depends heavily on notions of periodicity and aperiodicity, more precisely on the existence of a set of Wang tiles that only tile $\Z2$ in a strongly aperiodic manner. This is in stark contrast with the situation on $\Z{}$ where the domino problem is decidable thanks to a graph representation \cite{LM95}.

There has been a recent interest in symbolic dynamics on more general contexts, such as where the lattice $\Z2$ is replaced by the Cayley graph of an infinite, finitely generated group. Using again the existence of strongly aperiodic SFTs, the domino problem was shown to be undecidable, apart from $\Z{d}$, on some semisimple Lie groups \cite{SM97}, the Baumslag-Solitar groups \cite{AK13}, the discrete Heisenberg group (announced, \cite{MSU14}), surface groups \cite{CG17, AB18}, semidirect products on $\Z2$ \cite{BS18} or some direct products \cite{SB17}, polycyclic groups \cite{EJ15}, some hyperbolic groups \cite{CG17b}\dots  It is decidable on free groups \cite{SP08} and on virtually free groups \cite{BS13}, and it is conjectured that these are the only groups where the domino problem is decidable (Conjecture~\ref{conj:1} below).

As a consequence, outside of free and virtually free groups, one can not expect to find simple necessary and sufficient conditions for admitting a valid tiling. However, heuristics can be very useful when making an exhaustive search for SFTs with desired properties; necessary conditions in particular allow fast rejection of most empty SFTs. For example, a transducer-based heuristic was used in the search for the smallest set of Wang tiles that yield a strongly aperiodic $\Z2$-SFT \cite{JR15}. It is also of theoretical interest to understand how the group properties impact necessary conditions.

\subsection{Statements of results}
\label{ss:main statements}

We first consider a necessary and sufficient condition introduced by Piantadosi for an SFT on the free group to admit a valid tiling \cite{SP08}. It is well-known that an SFT on a finitely generated group can only admit a tiling if the ``corresponding'' SFT on the free group does, so this becomes a necessary condition on an arbitrary f.g. group (Corollary~\ref{cor:arbitrary}).

The next two stronger conditions were introduced by Piantadosi (to decide if an SFT admits a strongly periodic tiling of the free group) and by Chazottes-Gambaudo-Gautero \cite{CG14} in a more general context of tiling the plane by polygons, but which is necessary for an SFT to admit a tiling of $\mathbb Z^2$ \cite{JV19}. We prove that the two conditions are equivalent (Theorem~\ref{thm:equivalence}), and that they form a necessary condition for an SFT to admit a valid tiling on any amenable group (Theorem~\ref{thm:amenable}), confirming a remark of Jeandel (\cite{EJ15b}, Section~3.1).

Finally, we provide for any non-free finitely generated group a counterexample that satisfies all conditions but does not provide a valid tiling.
\section{Preliminaries}
\label{s:preliminaries}

\subsection{Symbolic dynamics on groups}

In the whole article $G$ is an infinite, finitely generated group with unit element $1_{G}$. We write $G=\langle \mathcal S\mid \mathcal R\rangle$ where $\mathcal S = \{g_{1},\dots, g_{d}\}$ is a finite set of generators and $\mathcal R = \{r_{1},\dots, r_{m},\dots\}\subset (S\cup S^{-1})^\ast$ is a (possibly infinite) set of relations. By convention $r\in\mathcal R$ means that $r = 1_G$.

For instance:
\begin{itemize}
\item the free group $\Fg{d}$ is the group on $d$ generators with no relations;
\item $\mathbb{Z}^{2}=\langle \{g_1, g_2\}\mid g_1g_2g_1^{-1}g_2^{-1}\rangle$.
\end{itemize}

Let $\mathcal{A}$ be a finite set endowed with the discrete topology; denote its cardinality $\#\mathcal A$. Let $\mathcal{A}^{G}=\{(x_{g})_{g\in G}\,|\,\forall g\in G :x_{g}\in\mathcal{A}\}$ be the set of all functions from $G$ to $\mathcal{A}$ endowed with the product topology. Given a finite subset $F\subset G$, an element $P\in\mathcal{A}^{F}$ is called a \emph{pattern} and $F=\supp(P)$ its \emph{support}; the set of all patterns is denoted $\mathcal A^\ast$. 

$\mathcal{A}^{G}$ is a compact space called the $G$-full shift. It is a symbolic dynamical system under the following $G$-action, called the \emph{$G$-shift}: 
\[
\forall x\in\mathcal{A}^{G}, \forall h\in G, (\sigma_{h}(x_{g}))_{g\in G}=(x_{h^{-1}g})_{g\in G}
\]
We call $G$-subshift a closed shift-invariant subset $Y\subset\mathcal{A}^{G}$. 

A pattern $P\in\mathcal{A}^F$ is said to \emph{appear} in a configuration $x\in\mathcal{A}^{G}$  (and we write $P\sqsubset x$) if there exists $g\in G$ such that $\sigma_g(x)|_{F}=P$.

Given a set of forbidden patterns $\mathcal F\subset \mathcal A^\ast$, we can define the corresponding $G$-subshift:
\[Y=Y_{\mathcal{F}}=\{x\in\mathcal{A}^{G}\mid \forall P\sqsubset x : P\notin \mathcal{F}\}.\]

Every $G$-subshift can be defined in this way using a set of forbidden patterns. When a subshift can be defined by a \emph{finite} set of forbidden patterns, we say it is a $G$-subshift of finite type ($G$-SFT). If furthermore the set of forbidden patterns can be chosen so that every pattern in $\mathcal{F}$ has support of the form $\{1_{G},g_{i}\}$ where $g_{i}\in\mathcal S$ for some set of generators $\mathcal S$, we say it is a $G$-nearest-neighbor subshift of finite type ($G$-NNSFT). Notice that this definition depends on the choice of $\mathcal S$ which is usually clear in the context. 

For example, If we consider $G=\mathbb{Z}$ with generator $+1$, $\mathcal{A}=\{0,1\}$ and $\mathcal{F}=\{11\}$ we obtain a $\mathbb{Z}$-NNSFT, the golden mean shift, a classical example in symbolic dynamics. 

\begin{definition}[Weakly \& strongly aperiodic]
For a configuration $x\in\mathcal{A}^{G}$, we define the orbit of the element $x$ under the shift action as $\orb_{\sigma}(x)=\{\sigma_{g}(x)|g\in G\}$ and the set of elements on $G$ that fix the configuration $x$ by $\stab_{\sigma}(x)=\{g\in G| \sigma_{g}(x)=x\}$. 
A configuration $x\in\mathcal{A}^{G}$ is 
\begin{description}
\item[strongly periodic] if $\stab_\sigma(x)$ has finite index or, equivalently, if $\orb_{\sigma}(x)$ is finite;
\item[strongly aperiodic] if $\stab_{\sigma}(x)=\{1_{G}\}$.
\item[weakly periodic] if it is not strongly aperiodic;
\item[weakly aperiodic] if it is not strongly periodic.
\end{description}
More generally, a subshift $X\subset\mathcal{A}^{G}$ is weakly/strongly aperiodic if every configuration on $X$ is weakly/strongly aperiodic.
\end{definition}

\begin{example}\label{ex:NNSFT}
In $G = \Z2$,
\begin{itemize}
\item the configuration $x$ such that $x_g=0$ for all $g$ is strongly periodic;
\item the configuration $x$ such that $x_{g_{1}^{n}}=0$ for all $n$, and $x_g=1$ otherwise, is weakly periodic and weakly aperiodic;
\item the configuration $x$ such that $x_{(0,0)}=0$, and $x_g=1$ otherwise, is strongly aperiodic.
\end{itemize}
\end{example}

\subsection{Wang tiles, NNSFT and graphs}

\begin{definition}[Wang tiles, Wang subshifts]
Let $G = \langle S\mid \mathcal R \rangle$ be a finitely generated group and $\mathcal{C}$ a finite set of colors. A \emph{Wang tile} on $\mathcal C$ and $S$ is a map $S\cup S^{-1} \rightarrow \mathcal{C}$.\medskip

Given a set $T$ of Wang tiles, the corresponding $G$-Wang subshift is defined as:
\[
X_{T}=\{(x_{g}) \in T^G\, |\,  \forall g\in G, s\in S\cup S^{-1}, x_{g}(s)=x_{gs}(s^{-1})\}
\]
We call the elements in $X_{T}$ $G$-Wang tilings.
\end{definition}

Notice that the definition of a Wang tile depends only on the chosen set of generators, so that the same Wang tile can be used for $\Fg{2}$ and $\mathbb Z^2$, for example.

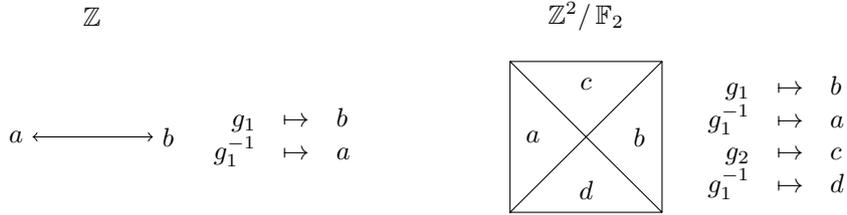
\begin{figure}[h!]
\begin{center}
\begin{tikzpicture}[every text node part/.style={align=right}]
\node at (1,2.6) {$\mathbb Z$};
\node (a) at (0,1) {$a$};
\node (b) at (2,1) {$b$};
\draw[<->] (a) -- (b);
\node at (3.5,1) {$\begin{array}{rcl}g_1&\mapsto& b\\g_1^{-1}&\mapsto& a\end{array}$};

\begin{scope}[shift={(6.5,0)}]
\node at (1,2.6) {$\mathbb Z^2 /\, \Fg{2}$};
\draw (0,0) -- (2,0) -- (2,2) -- (0,2) -- (0,0) -- (2,2) (0,2) -- (2,0);
\node at (0.3,1) {$a$};
\node at (1.7,1) {$b$};
\node at (1,1.7) {$c$};
\node at (1,0.3) {$d$};
\node at (3.5,1) {$\begin{array}{rcl}g_1&\mapsto& b\\g_1^{-1}&\mapsto& a\\g_2&\mapsto& c\\g_1^{-1}&\mapsto& d\end{array}$};
\end{scope}
\end{tikzpicture}

     \label{fig:wang}
     \caption{Examples of Wang tiles with colors $\mathcal{C}=\{a,b,c,d\}$ on one and two generators, respectively, with their corresponding maps.}
\end{center}
\end{figure}

Take any $G$-NNSFT $X$ on alphabet $\mathcal A$, where $G = \langle\{g_1,\dots, g_d\}\mid\mathcal R\rangle$ is an arbitrary finitely generated group. Let $\mathcal F$ be a set of forbidden patterns with each support of the form $\{1_{G},g_{i}\}$.

We associate to $X$ a set of $d$ graphs $\Gamma_1,\dots, \Gamma_d$, where the set of vertices is $\mathcal A$ for all $\Gamma_i$, and 

\[\forall a,b\in \mathcal A,\qquad a\to b \text{ in }\Gamma_i \Longleftrightarrow \left\{\begin{array}{ccc}1_G&\to& a\\g_i&\to& b\end{array}\right.\notin\mathcal F.\]

By definition of a $G$-NNSFT, it follows that a configuration $x$ belongs to $X$ if, and only if, $x_h \to x_{hg_i}$ is an edge in $\Gamma_i$ for all $h\in G$ and all $1\leq i\leq d$.

\begin{definition}[Cycles]
A \emph{cycle} on a graph $\Gamma$ is a path - with possible edge and vertex repetitions - that starts and ends on the same vertex. A cycle through the vertices $a_{1},\ldots,a_{n}a_{1}$, with $a_{i}\in\mathcal{A}$, is denoted $\overline{a_{1},\ldots,a_{n}}$. 

A cycle is \emph{simple} if it does not contain any vertex repetition. Denote $\mathcal{SC}(\Gamma)$ the set of simple cycles on $\Gamma$, which is a finite set.
\end{definition}

\begin{remark}
In graph theory, cycles are sometimes called \emph{closed walks}, in which case cycle means simple cycle. We decided to follow Piantadosi's conventions \cite{SP08} for convenience.
\end{remark}

Let $w$ be a cycle and $a\in \mathcal{A}$. We define:

\[|w|_a=\#\{i\mid w_{i}=a, 1\leq i\leq |w| \}.\]

In any cycle, the path between the closest repetitions is a simple cycle. By removing this simple cycle and iterating the argument, we can see that any cycle $w$ can be decomposed into simple cycles, in the sense that there are integers $\lambda_\omega$ for $\omega\in \mathcal{SC}(\Gamma)$ such that:

\[\forall a\in \mathcal A, |w|_a = \sum_{\omega\in \mathcal{SC}(\Gamma)}\lambda_\omega |\omega|_a.\]

We say that two $G$-subshifts $X, Y\subset \mathcal A^G$ are (topologically) \emph{conjugate} if there is a shift-commuting homeomorphism $\Phi$ (that is, $\Phi\circ \sigma_g = \sigma_g\circ\Phi$ for all $g\in G$) such that $\Phi(X)=Y$. An shift-commuting homeomorphism (or \emph{conjugacy}) corresponds to a reversible cellular automaton: there is a finite subset $H\subset G$ and a local rule $\varphi : \A^H\to \A$ such that 
\[\forall x\in X, \forall g\in G,\ \Phi(x)_g = \varphi(\sigma_{g^{-1}}(x)|_H),\]
and $\Phi^{-1}$ is itself a cellular automaton.

\begin{proposition}\label{prop:conjugate}
For any set of generators, each $G$-SFT is conjugate to a $G$-NNSFT and each $G$-NNSFT is conjugate to a $G$-Wang subshift.
\end{proposition}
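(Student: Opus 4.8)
The plan is to prove the two assertions separately, each by an explicit recoding that is visibly a sliding block code (hence a conjugacy in the sense defined just above), following the classical $\Z2$ arguments but taking care that every local rule involved looks only at nearest neighbours in the Cayley graph of the chosen generating set.

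For the first assertion, let $X = Y_{\mathcal F}$ be a $G$-SFT with $\mathcal F$ finite, and fix $N$ large enough that every pattern of $\mathcal F$ has support contained in a translate of the ball $H = B_N = \{g\in G : |g|\le N\}$ (word length for the chosen generators); enlarging $N$, I may assume $\mathcal S\subseteq H$. I would recode $X$ over the new alphabet $\mathcal B\subseteq \A^H$ consisting of those $H$-patterns that contain no forbidden sub-pattern, via the block map $\Phi(x)_g = \sigma_{g^{-1}}(x)|_H = (x_{gh})_{h\in H}$. This $\Phi$ is a cellular automaton, with the obvious candidate inverse $x_g = y_g(1_G)$, which is itself a cellular automaton (a single-site relabelling). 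The image subshift $X' = \Phi(X)$ is then cut out inside $\mathcal B^G$ by overlap-consistency constraints: for each generator $g_i$ and each $\alpha\in H\cap g_iH$ one must have $y_g(\alpha) = y_{gg_i}(g_i^{-1}\alpha)$. Since this condition constrains only the pair $(y_g, y_{gg_i})$, it has support $\{1_G, g_i\}$, so $X'$ is a $G$-NNSFT for the same generating set (the single-site admissibility having been absorbed into the alphabet $\mathcal B$).

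The heart of the argument — and the step I expect to be the main obstacle — is to verify that these purely nearest-neighbour constraints are strong enough to force $X'=\Phi(X)$, i.e. that every $y\in X'$ really is the block code of the configuration $x$ given by $x_g = y_g(1_G)$. I would prove the key identity $y_g(h) = y_{gh}(1_G)$ for all $g\in G$ and $h\in H$ by induction on the word length $|h|$: writing a geodesic factorisation $h = s\,(s^{-1}h)$ with $s\in \mathcal S$, the element $s^{-1}h$ lies in $H$ because $H$ is a ball, so applying the overlap constraint for $g_i = s$ at the point $g$ with $\alpha = h$ yields $y_g(h) = y_{gs}(s^{-1}h)$, and the induction hypothesis at base point $gs$ closes the step. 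This is exactly where connectedness of the window is used. Granting the identity, $\Phi(x) = y$; moreover $x\in X$, since any occurrence of a forbidden pattern would sit inside some translate $gH$ and hence inside the admissible block $y_g$, a contradiction. Thus $\Phi$ restricts to a conjugacy $X\to X'$.

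For the second assertion, let $X$ be a $G$-NNSFT with graphs $\Gamma_1,\dots,\Gamma_d$, and write $a\to b$ in $\Gamma_i$ for the allowed pairs. I would take colours $\mathcal C = \A\times\A$ and let $T$ be the set of Wang tiles $t\colon \mathcal S\cup \mathcal S^{-1}\to\mathcal C$ for which there is a centre symbol $a\in\A$ with $t(g_i) = (a,b_i)$ where $a\to b_i$ in $\Gamma_i$, and $t(g_i^{-1}) = (c_i,a)$ where $c_i\to a$ in $\Gamma_i$. The idea is that each tile broadcasts its own symbol together with its guess for each neighbour, and the matching rule $t_g(g_i) = t_{gg_i}(g_i^{-1})$ then forces both that the guess is correct and that the centres of $t_g$ and $t_{gg_i}$ form an edge of $\Gamma_i$. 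Reading off the centre, $\pi(t)_g = a$, is a one-block map $X_T\to\A^G$; I would check that it lands in $X$, that it is injective (the neighbour guesses and incoming colours are determined by the centres through the matching rule), and that it is onto $X$ (given $x\in X$, the tile at $g$ with centre $x_g$, guesses $x_{gg_i}$ and incoming colours $x_{gg_i^{-1}}$ is legal precisely because $x$ respects every $\Gamma_i$). Hence $\pi$ is a conjugacy $X_T\to X$, completing the proof.
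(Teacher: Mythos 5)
Your proof is correct. Note that the paper does not actually prove this proposition: it declares it folklore and defers to the two cited references (one for the SFT--NNSFT step, one for the NNSFT--Wang step), so your write-up supplies an argument the paper omits. Both of your constructions are the standard ones --- higher-block recoding over a ball, and ``broadcast the centre symbol together with its guessed neighbours'' Wang tiles --- and you correctly isolate the one genuinely delicate point: that the purely nearest-neighbour overlap constraints on the ball alphabet already force the global consistency $y_g(h)=y_{gh}(1_G)$. Your induction along geodesics is exactly where the connectedness of the window (that $H$ is a ball containing the generators) is used, and it is the step a careless transcription from $\mathbb Z^2$ would gloss over. Two minor points of care: in the induction step the first letter $s$ of a geodesic for $h$ lies in $\mathcal S\cup\mathcal S^{-1}$, not necessarily in $\mathcal S$, so when $s=g_i^{-1}$ you must apply the overlap constraint for $g_i$ based at $gs$ rather than at $g$ (the computation is symmetric and the conclusion $y_g(h)=y_{gs}(s^{-1}h)$ is unchanged); and your map $\pi$ from the Wang subshift back to the NNSFT is letter-to-letter, which is consistent with --- and in fact justifies --- the remark the paper makes immediately after the proposition that this conjugacy can be chosen with $H=\{1_G\}$.
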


This is folklore. A detailed proof for the SFT - NNSFT part can be found in \cite{BL17} (Propositions~1.6 and 1.7), and a proof of the NNSFT - Wang subshift part in \cite{MH18}.

Since the conjugacy from a $G$-Wang subshift to a $G$-NNSFT can be chosen letter-to-letter (i.e. $H=\{1_G\}$ in the definition), it is easy to see that the conjugacy does not depend on $G$, so we could say that a set of graphs and a set of Wang tiles are conjugate.

\begin{proposition}
Let $X$ and $Y$ be two conjugate $G$-subshifts. $X$ admits a valid tiling if and only if $Y$ admits a valid tiling. The same is true for weakly/strongly (a)periodic tilings.
\end{proposition}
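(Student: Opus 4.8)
The plan is to observe that a conjugacy $\Phi\colon X\to Y$ is a shift-commuting \emph{bijection}, and to deduce that it preserves the stabilizer of every configuration exactly; since all four periodicity notions are defined solely in terms of $\stab_\sigma$ (equivalently $\orb_\sigma$), this single fact yields every claim at once.

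For the first statement, recall that ``admits a valid tiling'' simply means that the subshift is nonempty. As $\Phi$ is a homeomorphism, hence a bijection, $\Phi(X)=Y$ forces $X=\emptyset$ if and only if $Y=\emptyset$, which settles this case immediately.

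The heart of the argument is the identity $\stab_\sigma(\Phi(x))=\stab_\sigma(x)$ for every $x\in X$. I would establish it by the chain of equivalences
\[g\in\stab_\sigma(\Phi(x))\iff \sigma_g(\Phi(x))=\Phi(x)\iff \Phi(\sigma_g(x))=\Phi(x)\iff \sigma_g(x)=x\iff g\in\stab_\sigma(x),\]
where the second equivalence uses $\Phi\circ\sigma_g=\sigma_g\circ\Phi$ and the third uses the injectivity of $\Phi$. Since $\Phi^{-1}$ is itself a conjugacy, the statement is symmetric in $X$ and $Y$, so no separate reverse argument is needed.

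Finally, I would read off the remaining cases from this identity. Because $\Phi$ restricts to a bijection from the configurations of $X$ onto those of $Y$ with literally identical stabilizer subgroups, a configuration $x$ is strongly periodic (finite index of $\stab_\sigma$), strongly aperiodic (trivial $\stab_\sigma$), weakly periodic, or weakly aperiodic exactly when $\Phi(x)$ is; hence $X$ contains a configuration of a given type if and only if $Y$ does. There is no genuine obstacle here: the only point deserving care is that the injectivity used in the stabilizer computation is guaranteed precisely by $\Phi$ being a homeomorphism, and that the finite-index characterization of strong periodicity transfers because the two stabilizer subgroups coincide rather than merely being isomorphic.
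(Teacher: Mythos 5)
Your proof is correct, and it supplies exactly the argument the paper omits: the authors state this proposition as folklore with no proof at all. The key identity $\stab_\sigma(\Phi(x))=\stab_\sigma(x)$, obtained from equivariance plus injectivity of $\Phi$, together with the observation that all four (a)periodicity notions depend only on the stabilizer, is precisely the standard route, and your handling of the nonemptiness case and of the finite-index transfer is sound.
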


\section{Piantadosi's and Chazottes-Gambaudo-Gautero's conditions}

\subsection{State of the art on the free group and $\Z{2}$}

The first two condition were introduced by Piantadosi in the context of symbolic dynamics on the free group $\Fg{d}$.

\begin{definition}[\cite{SP08}]
A family of graphs $\Gamma=\{\Gamma_{i}\}_{1\leq i\leq d}$ on alphabet $\A$ satisfies condition $(\star)$ if and only if there is some nonempty $\A^{\prime}\subset \A$ with a \emph{coloring function} $\Psi : \mathcal \A^\prime\times\mathcal S\to\mathcal \A^\prime$ such that, for any color $a\in \mathcal \A^{\prime}$ and any generator $g_i\in S$,
$a \to \Psi(a,g_i) \text{ is an edge in } \Gamma_i$.
\end{definition}

\begin{theorem}[\cite{SP08}]\label{thm:star}
Let $X$ be a $\mathbb{F}_{d}$-NNSFT on alphabet $\A$. $X$ is nonempty if and only if the corresponding set of graphs satisfies condition $(\star)$.
\end{theorem}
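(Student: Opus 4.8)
The plan is to prove both implications, the forward one being essentially bookkeeping and the converse being the heart of the matter. Throughout I would exploit the defining feature of $\Fg{d}$: having no relations, its Cayley graph with respect to $\mathcal S$ is the $2d$-regular tree, so a configuration is just a coloring of this tree in which, for every edge labelled $g_i$, the ordered pair (tail, head) is an edge of $\Gamma_i$. This is exactly the local description of an $\Fg{d}$-NNSFT recalled before the statement, and it is the absence of cycles in the tree that will let purely local (single-edge) consistency assemble into a global tiling.

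First I would prove that if $X$ is nonempty then $(\star)$ holds; this direction merely reads the structure off an existing tiling. Fix $x\in X$ and let $\A'=\{x_h : h\in \Fg d\}$ be the nonempty set of colors actually occurring. For each $a\in\A'$ I choose once and for all a witness $h_a$ with $x_{h_a}=a$, and set $\Psi(a,g_i)=x_{h_a g_i}$. Since $x$ is valid, $x_{h_a}\to x_{h_a g_i}$ is an edge of $\Gamma_i$, i.e.\ $a\to\Psi(a,g_i)$ is an edge of $\Gamma_i$, and its target lies in $\A'$ by construction; hence $\A'$ and $\Psi$ witness $(\star)$.

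The substantial direction is the converse: from $(\star)$ I must build a configuration, which I would do by coloring the tree vertex by vertex, growing outward from $1_G$ along reduced words. I set $x_{1_G}=a_0$ for some $a_0\in\A'$ and extend one edge at a time. A vertex $h$ already colored by $a\in\A'$ has neighbors $hg_i$ and $hg_i^{-1}$; across the edge towards $hg_i$ the constraint is that $a\to x_{hg_i}$ be an edge of $\Gamma_i$, which I satisfy by putting $x_{hg_i}=\Psi(a,g_i)\in\A'$, while across the edge towards $hg_i^{-1}$ the constraint is that $x_{hg_i^{-1}}\to a$ be an edge of $\Gamma_i$, which asks instead for a \emph{predecessor} of $a$ inside the working set. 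Because the graph is a tree, every vertex is reached from $1_G$ across exactly one edge and no cycle of constraints is ever created, so provided each single-edge extension can be carried out while staying inside a fixed finite set of colors, the local choices never conflict and glue into a global $x\in X$.

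The hard part will be precisely these backward extensions: $(\star)$ directly supplies successors inside $\A'$ through $\Psi$, but it says nothing about predecessors, and a naive choice may leave $\A'$ or get stuck. I expect to handle this by first replacing $\A'$ with a nonempty subset that is closed under taking successors \emph{and} that contains, for every one of its colors and every generator, a $\Gamma_i$-predecessor lying again in the subset. Since $\A'$ is finite and the successor maps $a\mapsto\Psi(a,g_i)$ send $\A'$ into itself, I would obtain such a bi-directionally closed set by a recurrence argument, passing to the eventual common image of the family of successor maps and checking that it is nonempty and stable. This finiteness step, which is exactly what guarantees the outward construction never gets stuck, is the point I expect to demand the most care; once it is in place, the tree construction above finishes the proof.
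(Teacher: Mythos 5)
The paper does not actually prove this theorem (it is cited from \cite{SP08}), so I am judging your argument on its own terms. Your forward direction and your tree-colouring scheme for the converse are the right approach, but the step you yourself flag as delicate --- upgrading the successors-only set $\A'$ to a nonempty subset that also contains, for each of its colours and each generator, a $\Gamma_i$-predecessor --- is not merely hard: it is impossible in general. Take $d=2$, $\A=\{a,b\}$, let $\Gamma_1$ have exactly the edges $a\to b$ and $b\to a$, and let $\Gamma_2$ have exactly the edges $a\to a$ and $b\to a$. Then $\A'=\A$ with $\Psi(a,g_1)=b$, $\Psi(b,g_1)=a$ and $\Psi(\cdot,g_2)=a$ satisfies condition $(\star)$ exactly as it is written in the paper, yet no nonempty subset of $\A$ is bidirectionally closed ($\{a\}$ and $\{b\}$ fail in $\Gamma_1$ for lack of loops, and $b$ has no $\Gamma_2$-predecessor at all), and the corresponding $\Fg{2}$-NNSFT is in fact empty: any valid configuration must use $b$ somewhere because $a\to a$ is not an edge of $\Gamma_1$, and an occurrence of $b$ at $h$ leaves no admissible colour for $hg_2^{-1}$. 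So no ``eventual common image'' or recurrence argument can manufacture the set you need out of $(\star)$ as stated; the implication you are trying to prove at that point is false.

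The resolution is that Piantadosi's condition is itself two-sided: it asks for a nonempty $\A'$ such that every $a\in\A'$ has, for every generator $g_i$, both some $b\in\A'$ with $a\to b$ an edge of $\Gamma_i$ and some $c\in\A'$ with $c\to a$ an edge of $\Gamma_i$; the paper's rendition of $(\star)$ has silently dropped the predecessor half. With the two-sided condition your proof closes with no repair needed: in the forward direction the set of colours occurring in a configuration supplies predecessors via $x_{hg_i^{-1}}$ just as it supplies successors via $x_{hg_i}$; in the converse, colour each reduced word according to its last letter (a chosen successor if it ends in $g_i$, a chosen predecessor if it ends in $g_i^{-1}$), and since every element of $\Fg{d}$ other than $1_G$ is reached across exactly one tree edge, each constraint is checked exactly once and never conflicts. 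You should therefore restate the condition in its two-sided form rather than attempt to derive the missing half from the one-sided one.
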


This theorem provides a decision procedure for Domino problem in free groups of any rank: find a subalphabet such that every letter admits a valid neighbourg in the subalphabet for every generator.

\begin{definition}[\cite{SP08}]\label{def:piantadosi}
\label{def:star}
Consider a family of graphs $\Gamma=\{\Gamma_{i}\}_{1\leq i\leq d}$ and $\mathcal{SC}(\Gamma_i) = \{\omega^j_{i}\}_{1\leq j\leq \#\mathcal{SC}(\Gamma_i)}$ the set of simple cycles for each graph $\Gamma_i$.

We denote by $(\star\star)$ the following equation on $x_{i,j}$:
\[\forall a\in\mathcal{A},\ \sum_{j=1}^{ \#\mathcal{SC}(\Gamma_1)} x_{1,j} |\omega_{1}^{j}|_a = \sum_{j=1}^{ \#\mathcal{SC}(\Gamma_2)} x_{2,j} |\omega_{2}^{j}|_a = \dots = \sum_{j=1}^{ \#\mathcal{SC}(\Gamma_d)} x_{d,j} |\omega_{d}^{j}|_a.\]

We say that the graph family satisfies condition $(\star\star)$ if equation $(\star\star)$ is not empty (e.g. all graphs contain at least a cycle) and admits an nontrivial positive solution.
\end{definition}

\begin{remark}
We formulated the previous condition in terms of simple cycles (using the formalism from Theorem~3.6 instead of Theorem~3.4 in \cite{SP08}) because they form a finite set, making it easier to prove formally when the condition is not satisfied.
\end{remark}

\begin{theorem}[\cite{SP08}, Theorem~3.6]\label{thm:piantadosi} A $\Fg{d}$-NNSFT contains a strongly periodic configuration if and only the associated family of graphs satisfies condition $(\star\star)$.
\end{theorem}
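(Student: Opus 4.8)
The plan is to set up a dictionary between strongly periodic configurations of a $\Fg{d}$-NNSFT and finite colored permutation structures, and to read condition $(\star\star)$ off of that dictionary. The starting observation is that if $x$ is strongly periodic then $H := \stab_\sigma(x)$ has finite index, and unwinding the definition of the shift shows $x_{hg} = x_g$ for every $h\in H$; hence $x$ factors through a coloring $\bar x\colon H\backslash\Fg{d}\to\A$ of the (finitely many) right cosets. For each generator $g_i$, right multiplication $\rho_i\colon Hg\mapsto Hgg_i$ is a permutation of the finite set $H\backslash\Fg{d}$, and the NNSFT condition says precisely that $\bar x(Hg)\to\bar x(\rho_i(Hg))$ is an edge of $\Gamma_i$. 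Thus each $\rho_i$ partitions the cosets into disjoint cycles, and following $\bar x$ along any such cycle traces a closed cycle in $\Gamma_i$.

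For the forward implication I would count colors. Fix a color $a$ and let $n_a = \#\{Hg : \bar x(Hg) = a\}$; this number does not depend on $i$. On the other hand, grouping the cosets by the $\rho_i$-cycles and decomposing each closed cycle into simple cycles (using the decomposition recalled before Proposition~\ref{prop:conjugate}), I obtain nonnegative integers $x_{i,j}$ — the total multiplicity with which the simple cycle $\omega_i^j$ occurs — such that $n_a = \sum_j x_{i,j}\,|\omega_i^j|_a$ for every $a$. Equating the two expressions for $n_a$ across the generators gives exactly equation $(\star\star)$; since $H\backslash\Fg{d}$ is nonempty the solution is nontrivial and nonnegative, and each $\Gamma_i$ contains a cycle so the equation is nonempty.

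For the converse I would reverse this construction. Starting from a nontrivial nonnegative solution of $(\star\star)$ (which may be taken with integer entries, as the nonnegative solutions of a homogeneous rational system form a rational cone), for each generator $g_i$ form the multiset of simple cycles in which $\omega_i^j$ appears $x_{i,j}$ times. By $(\star\star)$ the number of cycle-positions colored $a$ equals the same value $n_a$ for all $i$, so I may fix a single vertex set $V$ containing exactly $n_a$ vertices of color $a$ (with coloring $c\colon V\to\A$) and, for each $i$, a color-preserving bijection between $V$ and the positions of the $i$-th cycle multiset. Transporting the ``successor along the cycle'' map through this bijection yields a permutation $\pi_i$ of $V$ with the property that $c(v)\to c(\pi_i(v))$ is an edge of $\Gamma_i$ for every $v$. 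Because $\Fg{d}$ is free, the assignment $g_i\mapsto\pi_i$ extends to a right action of $\Fg{d}$ on $V$; fixing a basepoint $v_0$ I define $x_g := c(v_0\cdot g)$. A direct check shows $x_h\to x_{hg_i}$ is always an edge of $\Gamma_i$, so $x$ lies in the subshift, and the kernel of the action (a finite-index normal subgroup) stabilizes $x$, so $x$ is strongly periodic.

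The step I expect to be most delicate is the converse, in two respects. First, the equality built into $(\star\star)$ is exactly what guarantees that one common colored vertex set $V$ can simultaneously realize all $d$ cycle multisets; without it the permutations $\pi_i$ could not share a single coloring. Second, the fact that there are no relations to satisfy is what lets me choose the $\pi_i$ independently and still obtain a genuine action — this is precisely where freeness of $\Fg{d}$ enters, and it is why the analogous statement fails to be sufficient for general groups. I would also be careful with the left/right conventions of the shift versus the group action, which is why the configuration is read as $x_g = c(v_0\cdot g)$ along a right action rather than a left one.
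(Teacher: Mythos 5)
The paper does not actually prove Theorem~\ref{thm:piantadosi}: it is imported verbatim from Piantadosi \cite{SP08} (his Theorem~3.6), so there is no in-paper argument to compare yours against. Your proof is correct and is essentially the standard one underlying Piantadosi's result: a strongly periodic configuration is the same thing as an $\A$-coloring of the finite coset space $H\backslash\Fg{d}$ on which each generator acts by a color-compatible permutation, the cycle decomposition of these permutations produces the integer solution of $(\star\star)$, and conversely an integer solution lets you build the permutations on a common colored finite set, freeness being exactly what allows the $d$ permutations to be chosen independently. All the delicate points are handled: nontriviality forces every generator's cycle multiset to be nonempty (so each $\pi_i$ is defined on all of $V$ and each $\Gamma_i$ contains a cycle), the rational-cone argument justifies passing to integer solutions (the same device the paper uses in the proof of Theorem~\ref{thm:equivalence}), and the kernel of the permutation representation is a finite-index subgroup of $\stab_\sigma(x)$, giving strong periodicity. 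One could shave a line by using the stabilizer of the basepoint $v_0$ (index at most $\#V$) instead of the full kernel, but that is cosmetic.
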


\begin{example}\label{ex:piantadosi}We illustrate Piantadosi's conditions on the following example:

\begin{center}
		\begin{tikzpicture}
			\foreach \i in {0,1,2} {
				\node (V\i) at (150-120*\i:2cm) [shape=circle,draw=black] {$\i$};
			}
			\draw[-latex] (V0) [bend left] to (V1);
			\draw[-latex] (V1) [bend left] to (V2);
			\draw[-latex] (V2) [bend left] to (V0);
			
			\foreach \i in {0,1,2} {
				\node (W\i) at ([xshift=7cm,yshift=-1cm]90+120*\i:2cm) [shape=circle,draw=black] {$\i$};
			}
			\draw[-latex] (W1) to (W0);
			\draw[-latex] (W0) to (W2);
			\draw[-latex] (W1) [out=240,in=300,looseness=8] to (W1);
			\draw[-latex] (W2) [out=240,in=300,looseness=8] to (W2);
			
			\node at (150:2cm) [xshift=-1cm] {$\Gamma_1:$};
			\node at (150:2cm) [xshift=6cm] {$\Gamma_2:$};
		\end{tikzpicture}
\end{center}

The corresponding $\Fg{2}$-NNSFT admits a tiling, because it satisfies condition $(\star)$ on alphabet $\A^\prime = \A$. However, it does not admit a periodic tiling: the simple cycles of $\Gamma_1$ are (up to shifting) $\{\overline{012}\}$ and the simple cycles of $\Gamma_2$ are $\{\overline{1}, \overline{2}\}$, so Equation $(\star\star)$ is:
\begin{align*}x_{1,1} &= 0 &(a = 0)\\ x_{1,1} &= x_{2,1} &(a = 1)\\ x_{1,1} &= x_{2,2}&(a = 2)\end{align*}
which obviously doesn't admit a solution. As we will see later, the corresponding $\Z2$-NNSFT doesn't admit any tiling.
\end{example}

\begin{remark} \label{rem:common}
For example, if all graphs $\Gamma_i$ share a common cycle $w$, then condition $(\star\star)$ admits a solution and therefore the corresponding $\Fg{d}$-NNSFT contains a periodic configuration.
\end{remark}

\begin{definition}[\cite{CG14}]
\label{def:starstar}
Let $T$ be a set of Wang tiles on colors $\mathcal C$ and set of generators $S$. 
For each $g\in S\cup S^{-1}$ and each color $c\in\mathcal C$, define $c_g$ the subset of Wang tiles $\tau_i\in T$ such that $\tau_i(g)=c$.
We call $(\star\star)^\prime$ the following equation :
\[\forall g\in S, \forall c\in\mathcal C, \sum_{\tau_i\in c_g} x_i= \sum_{\tau_j\in c_{g^{-1}}}x_j\]
We say that $T$ satisfies condition $(\star\star)^\prime$ if Equation~$(\star\star)^\prime$ admits a positive nontrivial solution.
\end{definition}

\begin{theorem}[\cite{CG14}]\label{thm:starstarprime}
If a set $T$ of Wang tiles admits a valid tiling of $\mathbb Z^2$, then it satisfies condition $(\star\star)^\prime$.
\end{theorem}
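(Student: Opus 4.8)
The plan is to use an averaging (Følner) argument based on empirical tile frequencies. Suppose $x \in X_T \subseteq T^{\Z2}$ is a valid tiling, and write $\Z2 = \langle \{g_1, g_2\}\rangle$ with its usual relation. For each $n$, set $B_n = \{0,\dots,n-1\}^2 \subset \Z2$ and define the empirical frequency of each tile $\tau_i$ in the box by
\[x_i^{(n)} = \frac{1}{n^2}\,\#\{p \in B_n : x_p = \tau_i\}.\]
Since $\sum_i x_i^{(n)} = 1$ and each $x_i^{(n)} \geq 0$, the vectors $(x_i^{(n)})_i$ live in the compact simplex of $\mathbb{R}^{\#T}$, so some subsequence converges to a limit $(x_i^*)_i$ with $x_i^* \geq 0$ and $\sum_i x_i^* = 1$. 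This limit is my candidate nontrivial solution; passing to a subsequence is exactly what lets me sidestep the fact that a general tiling need not have well-defined tile frequencies.

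Next I would show that the defining equations of $(\star\star)^\prime$ hold for the $x_i^{(n)}$ up to a boundary error of order $1/n$. Fix a generator, say $g_1$, and a color $c \in \mathcal C$. Along each of the $n$ horizontal lines $\{(0,j),\dots,(n-1,j)\}$ inside $B_n$, the matching condition $x_p(g_1) = x_{pg_1}(g_1^{-1})$ identifies the $g_1$-edge color of the tile at $(i,j)$ with the $g_1^{-1}$-edge color of the tile at $(i+1,j)$ for $0 \leq i \leq n-2$. Hence the multiset of $g_1$-colors of the first $n-1$ tiles on the line equals the multiset of $g_1^{-1}$-colors of the last $n-1$ tiles, so on this line the number of tiles with $\tau(g_1) = c$ and the number with $\tau(g_1^{-1}) = c$ differ by at most one (only the two endpoints can fail to be paired). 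Summing over the $n$ lines gives
\[\Bigl|\sum_{\tau_i \in c_{g_1}} \#\{p \in B_n : x_p = \tau_i\} - \sum_{\tau_j \in c_{g_1^{-1}}} \#\{p \in B_n : x_p = \tau_j\}\Bigr| \leq n,\]
and dividing by $n^2$ yields $\bigl|\sum_{\tau_i \in c_{g_1}} x_i^{(n)} - \sum_{\tau_j \in c_{g_1^{-1}}} x_j^{(n)}\bigr| \leq 1/n$. The identical argument along vertical lines handles $g_2$.

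Finally I would pass to the limit along the convergent subsequence: the $O(1/n)$ boundary error vanishes, so the limit frequencies satisfy
\[\forall g \in S,\ \forall c \in \mathcal C,\quad \sum_{\tau_i \in c_g} x_i^* = \sum_{\tau_j \in c_{g^{-1}}} x_j^*\]
exactly, which is precisely Equation~$(\star\star)^\prime$. Since $(x_i^*)_i \geq 0$ and $\sum_i x_i^* = 1$, this solution is nonnegative and nontrivial, so $T$ satisfies condition $(\star\star)^\prime$. The only delicate point is the boundary estimate, but for square boxes in $\Z2$ the perimeter-to-area ratio tends to zero by inspection, so the surface terms are swamped in the limit; everything else is a routine compactness-and-limit argument. (In the generalisation to amenable groups later in the paper, this is exactly the step that must be replaced by a genuine Følner sequence, where controlling the boundary becomes the real work.)
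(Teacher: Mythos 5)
Your proof is correct, and it is essentially the paper's own argument: the paper does not reprove this $\mathbb Z^2$ statement directly (it cites \cite{CG14}), but its proof of the amenable generalisation, Theorem~\ref{thm:amenable}, is exactly your Følner-averaging argument, of which your line-by-line pairing over the boxes $B_n$ is the specialisation to the Følner sequence of squares in $\Z{2}$.
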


This condition and result was introduced in \cite{CG14}, but a much easier presentation in our context is given in \cite{JV19}.

\begin{example}
Example~\ref{ex:piantadosi} is conjugate to the following set of Wang tiles:
\begin{center}
\begin{tikzpicture}[every text node part/.style={align=left}]
\node at (9,1) {$0\mapsto \tau_0$ \\ $1\mapsto \tau_1$ \\ $2\mapsto \tau_2$};
\begin{scope}[shift={(0,0)}]
\node at (1,2.6) {$\tau_0$};
\draw (0,0) -- (2,0) -- (2,2) -- (0,2) -- (0,0) -- (2,2) (0,2) -- (2,0);
\node at (0.3,1) {$a$};
\node at (1,1.7) {$b$};
\node at (1.7,1) {$b$};
\node at (1,0.3) {$a$};
\end{scope}
\begin{scope}[shift={(2.7,0)}]
\node at (1,2.6) {$\tau_1$};
\draw (0,0) -- (2,0) -- (2,2) -- (0,2) -- (0,0) -- (2,2) (0,2) -- (2,0);
\node at (0.3,1) {$b$};
\node at (1,1.7) {$a$};
\node at (1.7,1) {$c$};
\node at (1,0.3) {$a$};
\end{scope}

\begin{scope}[shift={(5.4,0)}]
\node at (1,2.6) {$\tau_2$};
\draw (0,0) -- (2,0) -- (2,2) -- (0,2) -- (0,0) -- (2,2) (0,2) -- (2,0);
\node at (0.3,1) {$c$};
\node at (1,1.7) {$b$};
\node at (1.7,1) {$a$};
\node at (1,0.3) {$b$};
\end{scope}

\end{tikzpicture}
\end{center}
Equation $(\star\star)^\prime$ becomes the following, where next to each equation is the corresponding generator and color:
\begin{align*}
(g_1, a)\qquad& x_{2}=x_{0}&(g_2, a)\qquad& x_{1}=x_{0}+x_1\\
(g_1, b)\qquad& x_{0}=x_{1}&(g_2, b)\qquad& x_{0}+x_2=x_{2}\\
(g_1, c)\qquad& x_{1}=x_{2}&(g_2, c)\qquad& 0=0\\
\end{align*}
This equation does not admit a positive nontrivial solution, so the corresponding $\Z2$-Wang subshift is empty.
\end{example}

\subsection{Conditions $(\star\star)$ and $(\star\star)'$ are equivalent}

Although conditions $(\star\star)$ and $(\star\star)'$ were introduced in very different contexts (periodic tilings of the free group and tilings of the Euclidean plane, respectively), it turns out that they are equivalent. The fact that $(\star\star)$ is a condition on graphs (NNSFTs) and $(\star\star)^\prime$ is a condition on sets of Wang tiles (Wang subshifts) is only cosmetic since Proposition~\ref{prop:conjugate} lets us go from one model to the other.

\begin{theorem}\label{thm:equivalence}
Let T be a set of Wang tiles over the set of colors $\mathcal C$ and the set of generators $S$.

$T$ satisfies condition $(\star\star)^\prime$ if, and only if, the associated graphs satisfy condition $(\star\star)$.
\end{theorem}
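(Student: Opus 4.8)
The plan is to read both conditions as statements about \emph{circulations} on the graphs $\Gamma_i$ --- nonnegative edge weightings in which, at every vertex, the total weight of incoming edges equals the total weight of outgoing edges --- and to exploit the classical fact that a circulation on a finite directed graph decomposes as a nonnegative combination of simple cycles. Since the conjugacy of Proposition~\ref{prop:conjugate} between graphs and Wang tiles is letter-to-letter, I identify the alphabet $\A$ with the tile set $T$, so that $a\to b$ is an edge of $\Gamma_i$ precisely when $a(g_i)=b(g_i^{-1})$.

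The structural fact I would isolate first is that every edge \emph{leaving} $a$ in $\Gamma_i$ carries the same middle color $a(g_i)$, and every edge \emph{entering} $a$ carries the same middle color $a(g_i^{-1})$. Hence, given any circulation $f_i$ on $\Gamma_i$, write $x_a$ for its throughput at $a$ (the common value of its in-weight and out-weight there). Summing the total $f_i$-weight on the edges of a fixed middle color $c$ first by source and then by target gives
\[\sum_{a\in c_{g_i}} x_a \;=\; \sum_{b\in c_{g_i^{-1}}} x_b,\]
so the throughput vector of \emph{any} circulation satisfies equation $(\star\star)^\prime$. On the other hand, if we write a circulation as $f_i=\sum_j x_{i,j}\,\omega_i^j$ (placing weight $x_{i,j}$ on each edge of the simple cycle $\omega_i^j$), its throughput at $a$ equals $\sum_j x_{i,j}|\omega_i^j|_a$. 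Thus condition $(\star\star)$ says exactly that every $\Gamma_i$ carries a circulation and that all $d$ of them share a single throughput vector $(x_a)_{a\in\A}$.

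With this dictionary the two implications are short. For $(\star\star)\Rightarrow(\star\star)^\prime$, take a nontrivial positive solution $(x_{i,j})$, let $x_a$ be the common value $\sum_j x_{i,j}|\omega_i^j|_a$, and read off $(\star\star)^\prime$ for a given generator $g_i$ and color $c$ from the displayed identity above applied to the direction $i$ of that generator. Nontriviality survives because $\sum_a x_a=\sum_j x_{i,j}|\omega_i^j|$ and every simple cycle has positive length, so some $x_a>0$. For the converse $(\star\star)^\prime\Rightarrow(\star\star)$, start from a positive solution $(x_a)$ and build, separately for each direction $i$, a circulation on $\Gamma_i$ with throughput $(x_a)$: for each color $c$ the color-$c$ edges of $\Gamma_i$ form a complete bipartite graph from $c_{g_i}$ to $c_{g_i^{-1}}$, equation $(\star\star)^\prime$ makes the source demands $\{x_a\}_{a\in c_{g_i}}$ and sink demands $\{x_b\}_{b\in c_{g_i^{-1}}}$ sum to a common value $S_c$, and the product flow $f_i(a\to b)=x_a x_b/S_c$ (and no flow where $S_c=0$) realises them. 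Summing over $c$ yields a circulation whose out-weight at $a$ equals $x_a$ (from color $a(g_i)$) and whose in-weight at $a$ equals $x_a$ (from color $a(g_i^{-1})$); decomposing it into simple cycles delivers the required $x_{i,j}$, with the same throughput $(x_a)$ for every $i$.

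I expect the one delicate point to be precisely this assembly in the converse: the out-flow at $a$ is governed by the color $a(g_i)$ while the in-flow at $a$ is governed by the \emph{different} color $a(g_i^{-1})$, and it is the simultaneous validity of $(\star\star)^\prime$ across all colors that forces both to equal $x_a$, making $f_i$ a genuine circulation rather than an unrelated collection of bipartite flows. The remaining ingredients --- feasibility of each bipartite flow from equal totals, the standard decomposition of a circulation into simple cycles, and passing to rational (hence, after scaling, integer) representatives so the transferred solutions stay nontrivial and positive --- are routine.
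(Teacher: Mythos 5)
Your proof is correct, and while your forward direction coincides with the paper's (both rest on the observation that a cycle meets each color class $c_{g_i}$ and $c_{g_i^{-1}}$ equally often, so summing a cycle decomposition gives $(\star\star)^\prime$), your converse takes a genuinely different route at the key step: constructing, from a solution $(x_a)$ of $(\star\star)^\prime$, a circulation on each $\Gamma_i$ with throughput $x_a$ at every vertex. The paper first passes to an \emph{integer} solution, blows each tile $\tau_i$ up into $x_i$ copies, chooses arbitrary bijections $\Psi_i^c$ between the copies in $c_{g_i}$ and those in $c_{g_i^{-1}}$ to obtain a $1$-in/$1$-out auxiliary digraph, and projects its vertex-disjoint cycles down to $\Gamma_i$; you instead put the fractional product flow $x_a x_b / S_c$ on the complete bipartite color class from $c_{g_i}$ to $c_{g_i^{-1}}$ and check conservation directly, the only delicate point (which you correctly flag) being that out-flow at $a$ is governed by $a(g_i)$ while in-flow is governed by $a(g_i^{-1})$, and that $(\star\star)^\prime$ for \emph{both} colors is what forces the two to equal $x_a$. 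Your version buys you two things: you never need rationality or integrality of the solution (real-valued flow decomposition into simple cycles suffices, so your closing remark about passing to integer representatives is actually unnecessary rather than merely routine), and the circulation is written down explicitly rather than via arbitrary choices; the paper's version buys a purely combinatorial, integer-weighted object whose cycle decomposition is immediate from Eulerian considerations. Both are complete proofs.
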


\begin{proof}
$(\Leftarrow)$ Let $(x_{i,j})$ be a nonnegative solution to equation $(\star\star)$. For every tile $\tau_i$, put $x_i = \sum_{j=1}^{\#\mathcal{SC}(\Gamma_1)}x_{1,j}|\omega_1^j|_{\tau_i}$.

Because each simple cycle of $\Gamma_1$ is a cycle, it contains as many tiles in $c_{g_1}$ as in $c_{g_1^{-1}}$; that is, $\sum_{\tau_i\in c_{g_1}} |\omega_1^j|_{\tau_i}  = \sum_{\tau_j\in c_{g_1^{-1}}} |\omega_1^j|_{\tau_i}$. Summing over all simple cycles $\omega_1^j$, we get $\sum_{\tau_i\in c_{g_1}} x_i  = \sum_{\tau_i\in c_{g_1^{-1}}} x_j$. 

Since $(x_{i,j})$ is a solution to Equation~$(\star\star)$, we also have $x_i = \sum_{j=1}^{\#\mathcal {SC}(\Gamma_n)}x_{n,j}|\omega_n^j|_{\tau_i}$ for every $n$, so the same argument shows that $(x_i)$ is a nonnegative solution of equation $(\star\star)^\prime$.\bigskip

$(\Rightarrow)$ Because equation $(\star\star)^\prime$ admits a solution, it admits a rational solution, and therefore an integer solution. Let $(x_i)$ be an integer, nonnegative solution of equation $(\star\star)^\prime$.

For the generator $g_1$, consider the graph $\Gamma_1$ obtained by the letter-to-letter conjugacy of Proposition~\ref{prop:conjugate}: concretely, it is the graph on vertices $\{\tau_i\}_{1\leq i\leq n}$ with $\tau_i\to\tau_j \Leftrightarrow \exists c\in\mathcal C, \tau_i \in c_{g_1} \text{ and }\tau_j \in c_{g_1^{-1}}$.
 
 We define an auxiliary graph $G_1$ on vertices $\{\tau_i^k\}_{1\leq i\leq n, 1\leq k \leq x_i}$ (that is, $x_i$ copies for each tile $\tau_i$) as follows.  

Because \[\forall c\in\mathcal C, \sum_{\tau_i\in c_{g_1}} x_i= \sum_{\tau_j\in c_{g_1^{-1}}}x_j,\]we can fix 
an arbitrary bijection :
\[\Psi_1^c : \{\tau_i^k\, :\, \tau_i\in c_{g_1}, 1\leq k\leq x_i\} \to \{\tau_{i'}^{k'}\, :\, \tau_{i'}\in c_{g_1^{-1}}, 1\leq k'\leq x_{i'}\}\]

and put an edge $\tau_i^k \to \tau_{i'}^{k'}$ if and only if $\Psi_1^c(\tau_i^k) = \tau_{i'}^{k'}$ for some $c\in\mathcal C$. Because each vertex has indegree and outdegree 1, it is a (not necessarily connected) Eulerian graph and admits a finite set of cycles covering every vertex exactly once.  

Notice that by construction, if $G_1$ has an edge $\tau_i^k \to \tau_{i'}^{k'}$, then $\Gamma_1$ has an edge $\tau_i \to \tau_{i'}$. Therefore each cycle of $G_1$ can be sent on a cycle in $\Gamma_1$ though the projection $\tau_i^k \mapsto \tau_i$. In this way, project the finite set of cycles obtained above and decompose them into simple cycles of $\Gamma_1$. Denote $x_{1,j}$ the total number of each simple cycle $\omega_1^j$ obtained in this way. 

Because each tile $\tau_i$ was present in $G_1$ as a vertex in $x_i$ copies, we have for every $i$: $\sum_{j=1}^{\#\mathcal{SC}(\Gamma_1)} x_{1,j}|\omega_{1}^{j}|_{\tau_i} = x_i$.

Now apply the same argument for each generator $g_2\dots g_n$ and the variables $(x_{i,j})$ thus obtained are a solution to equation $(\star\star)$.
\end{proof}

\section{Necessary conditions for tiling arbitrary groups}

Since the above conditions apply on sets of Wang tiles or set of graphs, they actually are conditions on a family of $G$-SFT where $G$ range over all groups with a fixed number of generators. The following proposition relates the properties of these SFT. It can be found (under a different form) in \cite{CP15} (Proposition~10 and remark below)

\begin{proposition}\label{prop:grupos}
Let $G_{1}=\langle \{g_{1}\dots g_{d}\}|\mathcal R\rangle$, $G_{2}=\langle \{g_{1}\dots g_{d}\}|\mathcal R^\prime\rangle$ be finitely generated groups, with $\mathcal R^\prime \subset \mathcal R$. Consider the canonical surjective morphism $\pi: G_{2}\rightarrow G_{1}$ defined by $\pi(g_{i})=g_{i}$, $\forall 1\leq i\leq d$. Let $\Phi : \A^{G_1} \to \A^{G_2} $ be defined by $\Phi(x)_{g} = x_{\pi(g)}$. Let $X_1$ and $X_2$ be the corresponding $G_1$-NNSFT and $G_2$-NNSFT respectively, such that $X_{2}$ has the same local rules that $X_{1}$.

We have:
\begin{enumerate}
\item If $x$ is a valid tiling for $X_1$ then $\Phi(x)$ is a valid tiling for $X_2$.
\item If $x$ is weakly periodic then $\Phi(x)$ is weakly periodic. In particular, if $X_1$ admits a weakly periodic tiling, then $X_2$ admits a weakly periodic tiling.
\item If $x$ is weakly aperiodic then $\Phi(x)$ is weakly aperiodic. In particular, if $X_1$ admits a weakly aperiodic tiling, then $X_2$ admits a weakly aperiodic tiling.
\end{enumerate}
\end{proposition}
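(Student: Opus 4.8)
The plan is to prove the three statements in order, since each builds on the previous one, and the core of the argument is the observation that $\pi$ is a homomorphism that carries the generator structure of $G_2$ onto that of $G_1$, so that the pulled-back configuration $\Phi(x)$ inherits local validity for free and periodicity with only a little group-theoretic bookkeeping.

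\textbf{Part (1).} First I would verify that $\Phi(x)$ respects the local rules of $X_2$. Since $X_1$ and $X_2$ are NNSFTs with the \emph{same} local rules (same forbidden patterns on supports $\{1_G, g_i\}$, encoded by the same graphs $\Gamma_i$), it suffices to check that for every $h \in G_2$ and every generator $g_i$, the edge $\Phi(x)_h \to \Phi(x)_{hg_i}$ is present in $\Gamma_i$. By definition $\Phi(x)_h = x_{\pi(h)}$ and $\Phi(x)_{hg_i} = x_{\pi(hg_i)} = x_{\pi(h)\pi(g_i)} = x_{\pi(h)g_i}$, using that $\pi$ is a morphism with $\pi(g_i) = g_i$. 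So the pair $(\Phi(x)_h, \Phi(x)_{hg_i})$ equals $(x_{\pi(h)}, x_{\pi(h)g_i})$, which is an edge in $\Gamma_i$ precisely because $x \in X_1$ is a valid tiling. This is essentially immediate once the morphism identity $\pi(hg_i) = \pi(h)g_i$ is written down.

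\textbf{Parts (2) and (3).} For periodicity I would translate between stabilizers under the two shift actions. The key computation is that if $\sigma_k(x) = x$ for some $k \in G_1$, then any lift $\tilde k \in G_2$ with $\pi(\tilde k) = k$ satisfies $\sigma_{\tilde k}(\Phi(x)) = \Phi(x)$: indeed $\sigma_{\tilde k}(\Phi(x))_g = \Phi(x)_{\tilde k^{-1} g} = x_{\pi(\tilde k^{-1} g)} = x_{k^{-1}\pi(g)} = (\sigma_k x)_{\pi(g)} = x_{\pi(g)} = \Phi(x)_g$. Hence $\pi^{-1}(\stab_\sigma(x)) \subseteq \stab_\sigma(\Phi(x))$. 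For (2), weak periodicity of $x$ means $\stab_\sigma(x) \neq \{1_{G_1}\}$; picking a nontrivial $k$ in it and any lift $\tilde k \neq 1_{G_2}$ (which exists since $\pi$ is surjective and $k \neq 1_{G_1}$ forces $\tilde k \notin \ker\pi$, so $\tilde k \neq 1_{G_2}$) gives a nontrivial stabilizing element of $\Phi(x)$, so $\Phi(x)$ is weakly periodic. For (3), weak aperiodicity of $x$ means $\stab_\sigma(x)$ has infinite index in $G_1$; I would argue that $\stab_\sigma(\Phi(x))$ then has infinite index in $G_2$ by relating the index of $\pi^{-1}(\stab_\sigma(x))$ in $G_2$ to the index of $\stab_\sigma(x)$ in $G_1$ via the correspondence theorem (the preimage of a subgroup under a surjection has the same index), combined with $\pi^{-1}(\stab_\sigma(x)) \subseteq \stab_\sigma(\Phi(x))$, which can only decrease the index.

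\textbf{Main obstacle.} The delicate point is part (3): the containment $\pi^{-1}(\stab_\sigma(x)) \subseteq \stab_\sigma(\Phi(x))$ goes the ``wrong way'' for directly transferring infinite index, since passing to a larger stabilizer \emph{lowers} the index. I expect the careful step to be confirming that a subgroup containing one of infinite index still has infinite index — which is true because $[G_2 : H_1] \geq [G_2 : H_2]$ whenever $H_1 \subseteq H_2$, and $[G_2 : \pi^{-1}(\stab_\sigma(x))] = [G_1 : \stab_\sigma(x)] = \infty$ — and double-checking the index equality $[G_2 : \pi^{-1}(H)] = [G_1 : H]$ for the surjection $\pi$, which holds because $g \mapsto \pi(g)$ induces a bijection between cosets of $\pi^{-1}(H)$ in $G_2$ and cosets of $H$ in $G_1$. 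The last sentences of (2) and (3) then follow immediately by applying parts (2), (3) to a witnessing configuration in $X_1$.
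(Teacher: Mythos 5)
Parts (1) and (2) are correct and follow the same route as the paper. The problem is in part (3), and it is exactly at the spot you flagged as the ``main obstacle'': you only establish the inclusion $\pi^{-1}(\stab_\sigma(x)) \subseteq \stab_\sigma(\Phi(x))$, and then assert that ``a subgroup containing one of infinite index still has infinite index.'' That assertion is false, and the inequality you cite in its support, $[G_2 : H_1] \geq [G_2 : H_2]$ for $H_1 \subseteq H_2$, actually shows why: enlarging a subgroup can only \emph{decrease} its index, so knowing $[G_2 : \pi^{-1}(\stab_\sigma(x))] = \infty$ gives no upper bound on how small $\stab_\sigma(\Phi(x))$'s index might be (take $H_1 = \{1\}$ and $H_2 = G_2$ for an infinite $G_2$: the first has infinite index, the second has index $1$). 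So as written, part (3) does not go through.

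The fix --- which is what the paper does --- is to upgrade the inclusion to the equality $\stab_\sigma(\Phi(x)) = \pi^{-1}(\stab_\sigma(x))$. The reverse inclusion uses surjectivity of $\pi$: if $g \in \stab_\sigma(\Phi(x))$ then $x_{\pi(h)\pi(g)} = x_{\pi(h)}$ for all $h \in G_2$, and since $\pi(h)$ ranges over all of $G_1$, this says precisely that $\pi(g) \in \stab_\sigma(x)$, i.e.\ $g \in \pi^{-1}(\stab_\sigma(x))$. With the equality in hand, your (correct) observation that $[G_2 : \pi^{-1}(H)] = [G_1 : H]$ for a surjection finishes the argument: $[G_2 : \stab_\sigma(\Phi(x))] = [G_1 : \stab_\sigma(x)] = \infty$. (The paper phrases this via the induced surjection $\tilde\pi : G_2/\pi^{-1}(\stab_\sigma(x)) \to G_1/\stab_\sigma(x)$ between coset spaces, which is the same correspondence-theorem fact.) Everything else in your write-up is fine.
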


The strong properties are not preserved by $\Phi$, but of course the image of a strongly (a)periodic tiling remains weakly (a)periodic. Stronger versions with different hypotheses can be found in \cite{CP15, EJ15b}.

\begin{proof} 
\begin{enumerate}
 \item Since $X_2$ is an NNSFT, it is enough to check that, for all $h\in G_2$ and all $1\leq i\leq d$, $\Phi(x)_h\to \Phi(x)_{hg_i}$ is an edge in $\Gamma_i$, that is to say, that it is not a forbidden pattern for $X_2$. By definition of $\Phi$, $\Phi(x)_h = x_{\pi(h)}$ and $\Phi(x)_{hg_i} = x_{\pi(h)\pi(g_i)} = x_{\pi(h)g_i}$. Because $x$ is a valid tiling for $X_1$, we have that $x_{\pi(h)} \to x_{\pi(h)g_i}$ is an edge in $\Gamma_i$, which proves the result.
 
 \item If $x$ is a weakly periodic tiling in $X_{1}$, then $\stab_\sigma(x)$ is nontrivial by definition. We have:
 \begin{align*}
 \stab_\sigma(\Phi(x)) &= \{g\in G_2\,:\,\forall h\in G_2, \Phi(x)_{hg} = \Phi(x)_h\}\\
 &  =\{g\in G_2\,:\,\forall h\in G_2, x_{\pi(h)\pi(g)} = x_{\pi(h)}\}
 \end{align*}
 
Since $\pi$ is surjective, this means that $\pi(\stab_\sigma(\Phi(x))) = \stab_\sigma(x)$. $\stab_\sigma(x)$ is nontrivial so $\stab_\sigma(\Phi(x)) = \pi^{-1}(\stab_\sigma(x))$ is nontrivial as well. 

\item If $x$ is a weakly aperiodic tiling in $X_{1}$, then $\stab_\sigma(x)$ does not have finite index. The canonical morphism $\pi : G_2 \to G_1$ yields a morphism on the quotient:
\[\tilde\pi : G_2/\pi^{-1}(\stab_\sigma(x)) \to G_1/\stab_\sigma(x)\]
and $\tilde \pi$ is surjective since $\pi$ is surjective. Remember that $\stab_\sigma(\Phi(x)) = \pi^{-1}(\stab_\sigma(x))$ by the previous point. Since $\stab_\sigma(x)$ does not have finite index, $G_1/\stab_\sigma(x)$ is infinite, so $G_2/\pi^{-1}(\stab_\sigma(x))$ is infinite as well, and $\stab_\sigma(\Phi(x)) = \pi^{-1}(\stab_\sigma(x))$ does not have finite index.
\end{enumerate}
\end{proof}

\begin{remark}
In the last proposition, the converse of the point (1) does not hold. For instance, if consider $G=\mathbb{Z}^{2}=\langle g_{1},g_{2}\mid g_{1}g_{2}g_{1}^{-1}g_{2}^{-1}\rangle$. Example~\ref{ex:piantadosi} provided an example of a set of graphs that satisfied condition $(\star)$ (so the corresponding $\Fg{2}$-NNSFT admits a valid tiling) but does not satisfies the conditions $(\star\star)$ (so the corresponding $\Z{2}$-NNSFT does not admit any valid tiling).

To understand why, notice that $ker(\pi)$ contains $g_{1}g_{2}g_{1}^{-1}g_{2}^{-1}$, so if a tiling $x \in \A^{\Fg2}$ is such that $x_{1_{\Fg2}}\neq x_{g_{1}g_{2}g_{1}^{-1}g_{2}^{-1}}$, then $\Phi^{-1}(x) = \emptyset$. If this happens for all $x\in X_2$ then $X_1$ is empty.
\end{remark}

\begin{corollary}\label{cor:arbitrary}
Let $\Gamma_1,\dots, \Gamma_d$ be a set of graphs that does not satisfy the condition $(\star)$. Then the corresponding $G$-NNSFT is empty for an arbitrary group $G$ with $d$ generators.
\end{corollary}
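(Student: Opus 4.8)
The plan is to combine Piantadosi's characterization on the free group (Theorem~\ref{thm:star}) with the monotonicity of tileability under adding relations (Proposition~\ref{prop:grupos}). The key observation is that the free group $\mathbb{F}_d = \langle\{g_1,\dots,g_d\}\mid\emptyset\rangle$ is the group on $d$ generators with the \emph{fewest} relations, so every finitely generated group $G = \langle\{g_1,\dots,g_d\}\mid\mathcal R\rangle$ on $d$ generators arises from $\mathbb{F}_d$ by adding the relations $\mathcal R \supseteq \emptyset$.

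First I would apply Proposition~\ref{prop:grupos} with $G_1 = G$ (relations $\mathcal R$) and $G_2 = \mathbb{F}_d$ (relations $\mathcal R' = \emptyset \subset \mathcal R$), letting $X_1 = X_G$ and $X_2 = X_{\mathbb{F}_d}$ be the NNSFTs sharing the local rules encoded by $\Gamma_1,\dots,\Gamma_d$. Point~(1) of that proposition states that if $X_G$ admits a valid tiling $x$, then $\Phi(x)$ is a valid tiling of $X_{\mathbb{F}_d}$; taking the contrapositive, if $X_{\mathbb{F}_d}$ is empty, then $X_G$ is empty.

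It then remains to observe that $X_{\mathbb{F}_d}$ is indeed empty. By hypothesis the graphs do not satisfy condition $(\star)$, so Theorem~\ref{thm:star} (its ``only if'' direction) gives that the $\mathbb{F}_d$-NNSFT $X_{\mathbb{F}_d}$ is empty. Chaining the two facts yields that $X_G$ is empty for every such $G$, which is exactly the claim.

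Since every step is a direct invocation of an already-established result, I do not expect a genuine obstacle. The only points requiring care are orienting the implication in Proposition~\ref{prop:grupos} correctly (it is the group with \emph{more} relations whose tilings push forward to the group with fewer, hence emptiness propagates from the free group to $G$, not the other way around), and verifying that the local rules—being read off from the same graph family $\Gamma_1,\dots,\Gamma_d$—are literally identical for both groups, so that the proposition applies verbatim.
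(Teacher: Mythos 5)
Your proposal is correct and follows exactly the paper's own argument: apply Proposition~\ref{prop:grupos} with $G_1=G$ and $G_2=\Fg{d}$ to push a hypothetical tiling of $G$ forward to a tiling of the free group, then contradict Theorem~\ref{thm:star}. Your extra care about the orientation of the implication is well placed and correctly resolved.
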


\begin{proof}
If there was a valid tiling in $G = \langle g_1,\dots, g_d \mid \mathcal R \rangle$ then, applying Proposition~\ref{prop:grupos}, we would obtain a tiling on $\Fg{d} = \langle g_1,\dots, g_d \mid \emptyset \rangle$, which is in contradiction with Theorem~\ref{thm:star}.
\end{proof}

\section{Necessary conditions for tiling amenable groups}

\begin{definition}[Følner sequence]
Let $G$ be a finitely generated group. A \emph{Følner sequence} for $G$ is a sequence of finite subsets $S_n\subset G$ such that:
F
\[G = \bigcup_nS_n\quad\text{and}\quad\forall g\in G, \frac{\#(S_n g\triangle S_n)}{\#S_n}\xrightarrow[n\to \infty]{}0,\]
where $S_n g = \{hg\ :\ h\in S_n\}$ and $A\triangle B = (A\backslash B)\cup (B\backslash A)$ is the symmetric difference. 
\end{definition}

In the previous definition, it is easy to see that the second condition only has to be checked for $g$ in a finite generating set. The set $S_ng\triangle S_n$ can be understood as the border of $S_n$, so an element of a Følner sequence must have a small border relative to its interior.

\begin{definition}[Amenable group]
A finitely generated group $G$ is \emph{amenable} if it admits a Følner sequence.
\end{definition}

A few examples : 
\begin{itemize}
\item $\mathbb Z^d$ is amenable and a Følner sequence is given by $S_n = [-n, n]^d$. Indeed, if $(g_i)_{1\leq i\leq d}$ is the canonical set of generators, then $\#S_n = (2n+1)^d$ and $\#(S_n+g_i)\triangle S_n = 2\cdot(2n+1)^{d-1}$.
\item $\Fg{d}$ for $d\geq 2$ is not amenable. In particular, the balls $S_n$ of radius $n$ - that is, reduced\footnote{with no $g_i^{-1}g_i$ or $g_ig_i^{-1}$ factors} words of length $\leq n$ on the set of generators $(g_i)_{1\leq i\leq d}$ - are not a Følner sequence. Indeed, one can easily check that $\#S_n = \Omega(d^n)$ and $\#(S_n g_i\triangle S_n) = \Omega(d^n)$.
\end{itemize}

The following theorem was conjectured in \cite{EJ15b}, Section~3.1.

\begin{theorem}[Heuristic for tiling an amenable group]\label{thm:amenable}
Let $G$ be a finitely generated amenable group, $S$ a finite set of generators, and $T$ a set of Wang tiles. 

If there is a tiling of $G$ with the tiles $T$, then condition $(\star\star)$ (or equivalently $(\star\star)^\prime$) is satisfied.
\end{theorem}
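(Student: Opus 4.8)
The plan is to establish condition $(\star\star)'$ directly; Theorem~\ref{thm:equivalence} then yields the equivalent condition $(\star\star)$ for free, so it suffices to exhibit a nonnegative nontrivial solution of Equation~$(\star\star)'$. The strategy generalises the $\mathbb Z^2$ argument of \cite{CG14, JV19}: starting from a valid tiling, I extract candidate weights by counting tile frequencies over the elements of a Følner sequence, and I show that each balance equation of $(\star\star)'$ holds in the limit because the only obstruction to exact balance is confined to the boundary of the Følner set, which is negligible by amenability.

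Concretely, I would fix a valid tiling $x\in X_T$ and a Følner sequence $(S_n)$ for $G$. For each tile $\tau_i\in T$, set
\[
x_i^{(n)} = \frac{\#\{h\in S_n\ :\ x_h = \tau_i\}}{\#S_n}.
\]
These weights are nonnegative and sum to $1$, so the vectors $(x_i^{(n)})_i$ lie in the compact standard simplex; by compactness, some subsequence converges to a limit $(x_i)_i$, which is again nonnegative and sums to $1$, hence is nontrivial. It then remains only to verify that $(x_i)_i$ solves Equation~$(\star\star)'$.

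The heart of the argument is the counting step, carried out for a fixed generator $g\in S$ and color $c\in\mathcal C$. The number of positions in $S_n$ whose tile carries color $c$ on its $g$-face equals $\#S_n\cdot\sum_{\tau_i\in c_g} x_i^{(n)}$. The tiling constraint $x_h(g)=x_{hg}(g^{-1})$ puts these positions in bijection, via right multiplication by $g$, with positions in $S_n g$ whose tile carries color $c$ on its $g^{-1}$-face. Thus the count of color $c$ on $g$-faces over $S_n$ equals the count of color $c$ on $g^{-1}$-faces over $S_n g$. Since $S_n g$ and $S_n$ differ only on the symmetric difference $S_n g\triangle S_n$, comparing the $g^{-1}$-face count over $S_n g$ with the one over $S_n$ gives
\[
\left|\ \sum_{\tau_i\in c_g} x_i^{(n)} - \sum_{\tau_j\in c_{g^{-1}}} x_j^{(n)}\ \right| \le \frac{\#(S_n g\triangle S_n)}{\#S_n}.
\]
The right-hand side tends to $0$ by the Følner condition (which, as noted after its definition, need only be checked on the finite generating set $S$). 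Passing to the limit along the convergent subsequence yields the exact equality $\sum_{\tau_i\in c_g} x_i = \sum_{\tau_j\in c_{g^{-1}}} x_j$; as $g$ and $c$ range over finite sets, $(x_i)_i$ is the desired nonnegative nontrivial solution.

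I expect the main obstacle to be the boundary bookkeeping in the counting step: precisely defining the bijection between color occurrences on $g$-faces over $S_n$ and on $g^{-1}$-faces over $S_n g$, and bounding the resulting discrepancy by $\#(S_n g\triangle S_n)$. Once this inequality is in place, the compactness extraction and the nontriviality of the limit (guaranteed by the normalisation to sum $1$) are routine.
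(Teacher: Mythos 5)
Your proposal is correct and follows essentially the same route as the paper: tile frequencies over a Følner sequence, a bijection via right multiplication by $g$ matching $g$-faces in $S_n$ with $g^{-1}$-faces in $S_ng$, an error term controlled by the symmetric difference, and a compactness extraction with normalisation guaranteeing nontriviality. Your bound $\#(S_ng\triangle S_n)$ is in fact slightly sharper than the paper's $\#(S_ng\triangle S_n)+\#(S_ng^{-1}\triangle S_n)$, but both vanish by the Følner condition, so the arguments are interchangeable.
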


\begin{proof}
Let $x \in T^G$ be a tiling of $G$ and $S_n$ be a Følner sequence for $G$. Using notations from Definition~\ref{def:starstar}, for a color $c\in\mathcal C$ and a generator $g\in S$, $c_g$ is the set of tiles $\tau$ such that $\tau(g)=c$. 

For any $h\in S_n\cap S_n g^{-1}$, we have $x_h\in c_g \Leftrightarrow x_{hg}\in c_{g^{-1}}$ (and in this case, $hg \in S_n\cap S_n g$). This means that, for all $c\in\mathcal C, g\in S$ and $n\in \mathbb N$:
\[\#\{h\in S_n\cap S_n g^{-1}\ :\ x_h\in c_g\} = \#\{h\in S_n\cap S_n g\ :\ x_h\in c_{g^{-1}}\}\]
so in particular $|\#\{h\in S_n\ :\ x_h\in c_g\} - \#\{h\in S_n\ :\ x_h\in c_{g^{-1}}\}| \leq \#S_ng\triangle S_n+\#S_ng^{-1}\triangle S_n$.

For each tile $\tau_i$, let $x^n_i = \frac{\#\{h\in S_n\ :\ x_h = \tau_i\}}{\#S_n}$. The previous computation implies that:
\[\forall g\in S, \forall c\in\mathcal C, \left|\sum_{\tau_i\in c_g} x^n_i - \sum_{\tau_j\in c_{g^{-1}}}x^n_j\right| \leq \frac{\#S_ng\triangle S_n}{\#S_n}+\frac{\#S_ng^{-1}\triangle S_n}{\#S_n}.\]

Notice that the right-hand side tends to $0$ as $n$ tends to infinity by definition of a Følner sequence. Consider the sequence of vectors $((x_i^n)_i)_{n\in\N}$ and, by compacity, let $(x_i)$ be any limit point of this sequence. Since $\sum_i x^n_i = 1$ for all $n$ by definition, $\sum_i x_i=1$ as well, and we have
\[\forall g\in S, \forall c\in\mathcal C, \sum_{\tau_i\in c_g} x_i = \sum_{\tau_j\in c_{g^{-1}}}x_j,\]
so $(x_i)$ is a nontrivial solution to Equation $(\star\star)$. Condition $(\star\star)^\prime$ follows by Theorem~\ref{thm:equivalence}.\end{proof}

\section{Counterexamples}

It is clear that none of the $(\star)$, $(\star\star)$ or $(\star\star)^\prime$ conditions can be a sufficient condition to admit a $\mathbb Z^d$-tiling, since it would be a decision procedure for the Domino problem; this argument applies to any group where the Domino problem is undecidable. For completeness, we provide explicit counterexamples for any non-free finitely generated group.

\begin{theorem}
Let $G$ be an arbitrary finitely generated group. If $G$ is not free, then there exists a Wang tile set that satisfies the three conditions $(\star)$, $(\star\star)$ and $(\star\star)^\prime$ and such that the corresponding $G$-Wang subshift is empty.
\end{theorem}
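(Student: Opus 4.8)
The plan is to build the counterexample from a finite quotient of the free group in which some relator of $G$ survives. The point is that passing to permutation graphs turns conditions $(\star)$ and $(\star\star)$ into statements that are automatically true, while the surviving relator makes any tiling of $G$ impossible.

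First I would fix a generating set $S=\{g_{1},\dots,g_{d}\}$ of $G$ and consider the canonical surjection $\pi:\Fg{d}\to G$. Since $G$ is not free, $\pi$ is not injective (otherwise $G\cong\Fg{d}$ would be free), so its kernel $N$ contains a nontrivial reduced word $r$; this $r$ is exactly a nontrivial relation $r=1_{G}$. Because free groups are residually finite, there is a finite-index normal subgroup $M\trianglelefteq\Fg{d}$ with $r\notin M$; write $H=\Fg{d}/M$ for the resulting finite group and $\overline{w}$ for the image of $w\in\Fg{d}$, so that $\overline{r}\neq 1$ in $H$. I then take the alphabet $\A=H$ and define, for each generator $g_{i}$, the graph $\Gamma_{i}$ to be the functional graph of the permutation $\pi_{i}:a\mapsto a\,\overline{g_{i}}$ (right multiplication by $\overline{g_{i}}$); the associated Wang tile set is obtained through Proposition~\ref{prop:conjugate}, and emptiness as well as the conditions are invariant under this conjugacy.

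The two positive conditions then hold for free. Condition $(\star)$ is witnessed by $\A'=\A$ together with the coloring function $\Psi(a,g_{i})=a\,\overline{g_{i}}$, since $a\to a\,\overline{g_{i}}$ is an edge of $\Gamma_{i}$ by definition. Condition $(\star\star)$ holds because each $\Gamma_{i}$, being the graph of a permutation of $\A$, is a disjoint union of simple cycles covering every vertex exactly once; hence assigning weight $1$ to every simple cycle gives $\sum_{j}x_{i,j}|\omega_{i}^{j}|_{a}=1$ for every color $a$ and every $i$, so the all-ones vector is a positive nontrivial solution of $(\star\star)$. Condition $(\star\star)^\prime$ follows from Theorem~\ref{thm:equivalence}.

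It remains to show that the $G$-Wang subshift is empty, and this is where the choice of $r$ and $M$ enters. In a permutation graph the constraint is rigid: the only edge leaving a vertex $a$ in $\Gamma_{i}$ goes to $a\,\overline{g_{i}}$, so any valid configuration $x$ must satisfy $x_{hg_{i}}=x_{h}\,\overline{g_{i}}$ for all $h\in G$ and all $i$, and hence $x_{hg}=x_{h}\,\overline{g}$ for every $g\in S\cup S^{-1}$. Applying this constraint step by step along the word $r=g_{i_{1}}^{\epsilon_{1}}\cdots g_{i_{k}}^{\epsilon_{k}}$ starting at $1_{G}$ gives $x_{1_{G}}=x_{r}=x_{1_{G}}\,\overline{r}$, the first equality because $r=1_{G}$ in $G$; since $\overline{r}\neq 1$, right multiplication by $\overline{r}$ is fixed-point-free and this equality is impossible, so no valid configuration over $G$ exists. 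The main difficulty is conceptual rather than computational: one must reconcile the positive conditions $(\star)$ and $(\star\star)$ with emptiness over $G$, and permutation graphs coming from a group action are precisely what achieves this, since the orbit structure of a permutation supplies the cycle partition needed for $(\star\star)$ while its rigidity lets a surviving relator rule out a $G$-tiling. The only non-elementary ingredient is residual finiteness of free groups, used to produce a finite quotient in which $r$ does not vanish; one also has to keep track of the left/right convention so that the permutation composed along $r$ is genuinely translation by $\overline{r}$, and therefore fixed-point-free.
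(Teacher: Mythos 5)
Your proof is correct, and it reaches the conclusion by a different route than the paper. Both arguments rest on the same mechanism: make every $\Gamma_i$ the functional graph of a permutation, so that $(\star)$ is witnessed by the whole alphabet, $(\star\star)$ holds because a permutation graph is a disjoint union of simple cycles covering each vertex once, and rigidity forces $x_{hg}=\sigma_g(x_h)$ along every edge, so a relator whose associated permutation is not the identity (or merely fails to fix some reachable letter) kills every configuration. The difference is how the permutations are produced. You invoke residual finiteness of free groups to get a finite quotient $H$ of $\Fg{d}$ in which the relator $r$ survives, and then take the right regular action of $H$ on itself; this is conceptually clean but uses a nontrivial classical theorem as a black box and yields a tile set of size $\#H$, which can be much larger than the length of $r$. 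The paper instead builds the permutations by hand: it places a partial matching $i-1\to i$ (or $i\to i-1$) on $n+1$ vertices according to the letters of the reduced relator $w_1\cdots w_n$ and completes each graph to a single Hamiltonian cycle, so that following $w$ from $\tau_0$ forces $\tau_n\neq\tau_0$ while $w=1_G$. That explicit construction is essentially the standard proof of residual finiteness of free groups (a homomorphism to a symmetric group on $n+1$ points separating $r$ from the identity), specialized and made self-contained, and it gives a tile set of size only $n+1$. So your argument is a legitimate, slightly less elementary and less economical version of the same idea; the one point worth being careful about, which you do address, is the left/right convention ensuring that composing the edge constraints along $r$ really yields right translation by $\overline{r}$, which is fixed-point-free when $\overline{r}\neq 1$.
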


\begin{proof}
Write $G = \langle g_1\dots g_d\mid \mathcal R\rangle$, and take $r_1 : w_1\dots w_n \in \mathcal R$, with $w_1\dots w_n$ a reduced word on generators $g_1\dots g_d$ (no generator is next to its inverse).

We build a family of graphs $\Gamma_d$ on vertices $\{0,\dots, n\}$ with the following edges:
\[\forall i\leq n, \left\{\begin{array}{l}\text{if }w_i = g_j\text{, then }\Gamma_j\text{ has an edge }i-1 \to i;\\
\text{if }w_i = g_j^{-1}\text{, then }\Gamma_j\text{ has an edge }i\to i-1.\\
\end{array}\right.\]
Notice that every vertex has indegree and outdegree at most $1$ and we did not create any cycle in the process, so we can complete every $\Gamma_j$ to be isomorphic to a $n$-cycle graph $C_n$.

Now we define a set of $n+1$ Wang tiles on $n+1$ colors $\{0\dots n\}$ as follows. Tile $\tau_i$ has the following colors: for all $j$, $g_j^{-1}\to i$ and $g_j\to k$ if there is an edge $\tau_i\to \tau_k$ in $\Gamma_j$.

\begin{example}
For $\mathbb Z^2$, we have $r_1: g_1g_2g_1^{-1}g_2^{-1} = 1$. Therefore $\Gamma_1$ contains $0\to1$ and $3\to2$, 
and $\Gamma_2$ contains $1\to2$ and $4\to3$. One possible completion for $\Gamma_1$ and $\Gamma_2$ is the following:

\begin{center}
\begin{tikzpicture}[scale=2]
	\foreach \i/\x/\y in {0/0/1,1/.95/.3,4/.58/-.8,3/-.58/-.8,2/-.95/.3} {
		\node (V\i) at (\x,\y) [shape=circle,draw=black] {$\tau_\i$};
	}
	\draw[-latex] (V0) [bend left] to (V1);
	\draw[-latex] (V1) [bend left] to (V4);
	\draw[-latex] (V4) [bend left] to (V3);
	\draw[-latex] (V3) [bend left] to (V2);
	\draw[-latex] (V2) [bend left] to (V0);
	\node at (-1,1) {$\Gamma_1:$};
	
	\begin{scope}[shift={(3,0)}]
	\foreach \i/\x/\y in {0/0/1,1/.95/.3,2/.58/-.8,4/-.58/-.8,3/-.95/.3} {
		\node (V\i) at (\x,\y) [shape=circle,draw=black] {$\tau_\i$};
	}
	\draw[-latex] (V0) [bend left] to (V1);
	\draw[-latex] (V1) [bend left] to (V2);
	\draw[-latex] (V2) [bend left] to (V4);
	\draw[-latex] (V4) [bend left] to (V3);
	\draw[-latex] (V3) [bend left] to (V0);
	\node at (-1,1) {$\Gamma_2:$};					
\end{scope}
\end{tikzpicture}
\end{center}	
and the corresponding set of Wang tiles:
\begin{center}
\begin{tikzpicture}
\begin{scope}[shift={(0,0)}]
\node at (1,2.6) {$\tau_0$};
\draw (0,0) -- (2,0) -- (2,2) -- (0,2) -- (0,0) -- (2,2) (0,2) -- (2,0);
\node at (0.3,1) {$0$};
\node at (1.7,1) {$1$};
\node at (1,1.7) {$1$};
\node at (1,0.3) {$0$};
\end{scope}
\begin{scope}[shift={(2.7,0)}]
\node at (1,2.6) {$\tau_1$};
\draw (0,0) -- (2,0) -- (2,2) -- (0,2) -- (0,0) -- (2,2) (0,2) -- (2,0);
\node at (0.3,1) {$1$};
\node at (1.7,1) {$4$};
\node at (1,1.7) {$2$};
\node at (1,0.3) {$1$};
\end{scope}

\begin{scope}[shift={(5.4,0)}]
\node at (1,2.6) {$\tau_2$};
\draw (0,0) -- (2,0) -- (2,2) -- (0,2) -- (0,0) -- (2,2) (0,2) -- (2,0);
\node at (0.3,1) {$2$};
\node at (1.7,1) {$0$};
\node at (1,1.7) {$4$};
\node at (1,0.3) {$2$};
\end{scope}

\begin{scope}[shift={(8.1,0)}]
\node at (1,2.6) {$\tau_3$};
\draw (0,0) -- (2,0) -- (2,2) -- (0,2) -- (0,0) -- (2,2) (0,2) -- (2,0);
\node at (0.3,1) {$3$};
\node at (1.7,1) {$2$};
\node at (1,1.7) {$0$};
\node at (1,0.3) {$3$};
\end{scope}

\begin{scope}[shift={(10.8,0)}]
\node at (1,2.6) {$\tau_4$};
\draw (0,0) -- (2,0) -- (2,2) -- (0,2) -- (0,0) -- (2,2) (0,2) -- (2,0);
\node at (0.3,1) {$4$};
\node at (1.7,1) {$3$};
\node at (1,1.7) {$3$};
\node at (1,0.3) {$4$};
\end{scope}

\end{tikzpicture}
\end{center}
\end{example}

This tiling satisfies condition $(\star\star)^\prime$ since we can assign the same weight $\frac 1n$ to each tile.

It is clear that a tiling $x$ of $G$ using tiles $\tau_0,\dots \tau_n$ must contain every tile. Assume w.l.o.g that $x_1 = \tau_0$. By construction we must have $x_{w_1} = \tau_1$, $x_{w_1w_2} = \tau_2$, and by an easy induction $x_w = \tau_n$. But since $w=1$ in $G$, we have $\tau_0 = x_1 = x_w = \tau_n$, a contradiction. Therefore there is no tiling of $G$ using tiles $\tau_0,\dots \tau_n$.
\end{proof}

\section{Conclusion}

We would like to mention the two following conjectures that relate the fact of admitting a valid (periodic) tiling and the underlying group structure:
\begin{conjecture}[\cite{BS13}]\label{conj:1}
A finitely generated group has a decidable domino problem if and only if it is virtually free.
\end{conjecture}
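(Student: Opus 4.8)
The conjecture asserts an equivalence whose two directions are of entirely different character, and the plan is to treat them separately. The direction \emph{virtually free $\Rightarrow$ decidable} is, I expect, routine given the existing machinery. A finitely generated group is virtually free exactly when it contains a free subgroup of finite index (equivalently, by Muller--Schupp, when it has context-free word problem, and by Karrass--Pietrowski--Solitar when it is the fundamental group of a finite graph of finite groups). The key reduction is that decidability of the domino problem is a commensurability invariant: if $[G:H]<\infty$, a fundamental-domain recoding turns any $G$-SFT into an $H$-SFT over the enlarged alphabet $\A^{G/H}$, with nearest-neighbour rules recoded through the finite Schreier graph, so the two domino problems are effectively inter-reducible. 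Taking $H$ to be a free finite-index subgroup and invoking Theorem~\ref{thm:star} (where nonemptiness of an $\Fg d$-NNSFT is the finite combinatorial test of condition $(\star)$) then settles the free, hence the virtually free, case. This is essentially the content of \cite{BS13}, and I would not attempt anything new here.

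The whole difficulty lies in the converse, which I would attack in contrapositive form: \emph{if $G$ is not virtually free, its domino problem is undecidable}. The first step is structural, via the classification of ends of a finitely generated group into $0,1,2,\infty$. The cases $e(G)\in\{0,2\}$ give finite and virtually-$\mathbb{Z}$ groups, both virtually free, so they are excluded. When $e(G)=\infty$, Stallings' theorem splits $G$ nontrivially as an amalgam or HNN extension over a finite subgroup, and Dunwoody's accessibility theorem guarantees that iterating this splitting terminates after finitely many steps with vertex groups that are each finite or one-ended. Since all-finite vertex groups would force $G$ itself to be virtually free (Bass--Serre), the hypothesis that $G$ is not virtually free produces a finitely generated \emph{one-ended} vertex group $H\leq G$. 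This reduces the converse to the one-ended case together with a transfer principle up the Bass--Serre tree.

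The transfer step is where the first real gap appears. Commensurability invariance only moves difficulty between finite-index companions, whereas the one-ended vertex group $H$ typically sits inside $G$ with infinite index, so I cannot simply quote it. The naive attempt is to lift a hard $H$-SFT to $G$ by forcing configurations to be constant along left cosets of $H$ and emulating the $H$-dynamics on the coset space, in the spirit of the pullback of Proposition~\ref{prop:grupos} (which, however, transfers tilings along a \emph{quotient} surjection rather than along a subgroup inclusion). The obstruction is that recognising the cosets of an infinite-index subgroup is exactly the kind of global information that cannot be enforced by nearest-neighbour rules, so this emulation does not go through in general. One may hope that the additional rigidity of the graph-of-groups structure — amalgams and HNN extensions over \emph{finite} edge groups, controlled by the tree on which $G$ acts — makes the transfer from a one-ended vertex group up to $G$ tractable, and I would pursue this via the Bass--Serre tree rather than arbitrary subgroups; but establishing this monotonicity in full generality is itself an open point.

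The main obstacle, however, is the one-ended case proper, and here I must be candid that no uniform method is presently known. Every established instance — $\mathbb{Z}^d$ for $d\geq 2$, the Baumslag--Solitar groups, surface groups, the discrete Heisenberg group, polycyclic and various hyperbolic groups — is proved by constructing a strongly aperiodic SFT tailored to the specific large-scale geometry (the flatness of $\mathbb{Z}^2$, the exponential self-similarity of $\mathrm{BS}(m,n)$, the hyperbolic combinatorics of surface groups), and these constructions stubbornly resist assembly into a single argument valid for an arbitrary one-ended group. A genuine proof of the converse would therefore require either a geometric characterisation of one-endedness strong enough to host a universal computation uniformly, or a new reduction showing that the domino problem of any one-ended group dominates that of $\mathbb{Z}^2$. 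I expect the ends dichotomy, accessibility, and commensurability invariance to be unproblematic; it is precisely the uniform embedding of undecidable computation into the geometry of one-ended groups that keeps this conjecture open.
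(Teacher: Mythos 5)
This statement is a \emph{conjecture} in the paper, not a theorem: the paper offers no proof and says explicitly that the ``if'' direction is proven while the ``only if'' direction is open. So your proposal should be judged as what it is, a research plan, and you are right about its overall shape: the virtually-free direction follows from finite-index recoding plus decidability on free groups (Theorem~\ref{thm:star} gives the finite test via condition $(\star)$, as in \cite{SP08, BS13}), and the genuine open content is the converse. Your structural reduction via ends, Stallings' splitting theorem and Dunwoody accessibility --- isolating a finitely generated one-ended vertex group when $G$ is not virtually free --- is also the standard framing, and your candid admission that no uniform construction is known for arbitrary one-ended groups correctly locates why the conjecture remains open.

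There is, however, one concrete error in your plan: the ``transfer step'' you flag as an open point is in fact a known and easy reduction. If $H \leq G$ is a finitely generated subgroup, the domino problem of $H$ many-one reduces to that of $G$: extend a generating set of $H$ to one of $G$, impose the $H$-nearest-neighbour rules only along the $H$-generator directions, and leave all other directions unconstrained. The constraints then couple only elements lying in a common left coset $gH$, so a valid $G$-configuration restricts to a valid $H$-tiling on each coset, and conversely any valid $H$-tiling extends to $G$ by copying it independently on every left coset via a transversal. No ``recognition of cosets'' by local rules is needed --- that obstruction arises only in your mistaken variant where you force configurations to be constant along cosets, which is not the right construction (and is indeed a different mechanism from the quotient pullback of Proposition~\ref{prop:grupos}). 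With this monotonicity in hand, the conjecture reduces cleanly to the single question of whether every finitely generated one-ended group has undecidable domino problem; that, and only that, is the missing ingredient, and there your proposal (like the literature) has no argument, so the proposal does not constitute a proof.
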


\begin{conjecture}[\cite{CP15}]
A finitely generated group has an SFT with no strongly periodic point if and only if it is not virtually cyclic.
\end{conjecture}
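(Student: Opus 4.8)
The plan is to manufacture, out of a single nontrivial relation of $G$, a set of Wang tiles whose \emph{only} obstruction to tiling is global: every local rule and every periodic cycle will be consistent, so the three conditions hold, yet the relation word can never be closed up. Since $G$ is not free, for any chosen generating set the canonical surjection $\Fg{d}\twoheadrightarrow G$ has nontrivial kernel, so I fix a reduced word $w=w_1\cdots w_n$ (each $w_i\in\{g_1^{\pm1},\dots,g_d^{\pm1}\}$, with no factor $g_jg_j^{-1}$ or $g_j^{-1}g_j$) that represents $1_G$, with $n\geq 1$. I work with the associated family of graphs and convert to Wang tiles at the end via Proposition~\ref{prop:conjugate}.

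First I record, for each position $1\leq i\leq n$, one ``forced'' edge between the vertices $i-1$ and $i$ of $\{0,\dots,n\}$: place it in $\Gamma_j$ oriented $i-1\to i$ when $w_i=g_j$, and oriented $i\to i-1$ when $w_i=g_j^{-1}$. The point that uses reducedness, and that I expect to be a short check, is that within each $\Gamma_j$ no vertex acquires two forced out-edges or two forced in-edges --- any such clash would correspond to a factor $g_jg_j^{-1}$ or $g_j^{-1}g_j$ in $w$ --- while the underlying undirected edges form the simple path $0\,\text{--}\,1\,\text{--}\cdots\text{--}\,n$ and therefore close no cycle. Hence the forced edges form, inside each $\Gamma_j$, a disjoint union of directed paths, which I complete to a permutation of $\{0,\dots,n\}$, concretely to a single $(n{+}1)$-cycle. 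The letter-to-letter conjugacy then yields $n+1$ Wang tiles $\tau_0,\dots,\tau_n$, where $\tau_i$ carries color $i$ on each $g_j^{-1}$ and color $\pi_j(i)$ on $g_j$, writing $\pi_j$ for the successor permutation of $\Gamma_j$.

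Verifying the three local conditions is then routine from the permutation structure. For $(\star)$ I take $\A'=\A=\{0,\dots,n\}$ and let $\Psi(a,g_j)=\pi_j(a)$; this is an edge of $\Gamma_j$ by construction. For $(\star\star)$ I decompose each permutation $\Gamma_j$ into its disjoint simple cycles and assign weight $1$ to each of them: every vertex is then covered exactly once, so all $d$ sums equal the all-ones vector and the equation admits a nontrivial positive solution. Condition $(\star\star)^\prime$ follows either directly --- assign weight $\tfrac1{n+1}$ to every tile and note that in a permutation each color occurs exactly once as a $g_j$-color and once as a $g_j^{-1}$-color, so every equation reads $\tfrac1{n+1}=\tfrac1{n+1}$ --- or immediately from Theorem~\ref{thm:equivalence}.

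The substance, and the step I expect to be the main obstacle, is emptiness. Because each $\Gamma_j$ is a permutation graph, any valid configuration $x$ is rigid: the nearest-neighbour rule forces $x_{hg_j}=\pi_j(x_h)$ for all $h$ and $j$. Setting $\pi_s=\pi_j^{\pm1}$ for a letter $s=g_j^{\pm1}$ and $\Pi=\pi_{w_n}\circ\cdots\circ\pi_{w_1}$, reading the loop $h,\,hw_1,\,\dots,\,hw=h$ (using $w=1_G$) gives $\Pi(x_h)=x_h$ for \emph{every} $h$, so every tile that appears in $x$ is a fixed point of $\Pi$. The forced edges give precisely $\pi_{w_i}(i-1)=i$ for each $i$, whence $\Pi(0)=n\neq 0$, so the value $0$ is never a fixed point. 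It remains to exclude configurations that avoid $\tau_0$ altogether, and this is the delicate part: I would argue that a nonempty such subshift must use every tile. Iterating $x_{hg_1}=\pi_1(x_h)$ along the powers of $g_1$ and using that $\pi_1$ is a single $(n{+}1)$-cycle, either $g_1$ has infinite order and its orbit sweeps through all $n+1$ tiles, or $g_1$ has finite order $m$, in which case $\pi_1^{m}$ must fix $x_{1_G}$; since a nontrivial power of an $(n{+}1)$-cycle is fixed-point-free this forces $(n{+}1)\mid m$ and the orbit again realizes all tiles (and if $(n{+}1)\nmid m$ the subshift is already empty). Either way $\tau_0$ occurs; shifting it to $1_G$ makes $0$ an appearing value, contradicting $\Pi(0)=n\neq 0$. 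The two points needing care are this ``every tile appears'' argument and the bookkeeping of the composition order in $\Pi$.
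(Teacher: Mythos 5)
You have proved the wrong statement. What you construct --- a tile set built from a reduced relation word $w_1\cdots w_n$, with permutation graphs completed to an $(n{+}1)$-cycle, satisfying $(\star)$, $(\star\star)$ and $(\star\star)^\prime$ yet giving an \emph{empty} $G$-Wang subshift --- is precisely the paper's counterexample theorem in Section~6, and your argument for it is essentially the paper's own (your extra care on the ``every tile appears'' step and on the composition $\Pi=\pi_{w_n}\circ\cdots\circ\pi_{w_1}$ tightens a point the paper dismisses with ``it is clear'', and your weight $\frac1{n+1}$ on $n+1$ tiles corrects the paper's $\frac1n$). But the statement you were asked to prove is the conjecture of \cite{CP15} quoted in the Conclusion: a finitely generated group admits an SFT with no strongly periodic point if and only if it is not virtually cyclic. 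The paper does not prove this; it explicitly records that only the ``if'' direction is known (in \cite{CP15}) and that the ``only if'' direction is open. So there is no proof in the paper to match yours against, and your text never engages with strong periodicity or virtual cyclicity at all.

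Moreover, your construction cannot be repaired into a proof of the conjecture. The SFT you build is empty by design, and an empty SFT only \emph{vacuously} has no strongly periodic point; under that reading every group, including $\mathbb{Z}$, would ``have an SFT with no strongly periodic point'', making the stated equivalence false --- the conjecture necessarily concerns \emph{nonempty} SFTs all of whose configurations are weakly aperiodic. Proving the ``if'' direction requires exhibiting, for each non--virtually-cyclic group, a nonempty weakly aperiodic SFT; proving the ``only if'' direction requires showing that every nonempty SFT on a virtually cyclic group contains a strongly periodic configuration (for $\mathbb{Z}$ this is the classical pumping argument via the graph representation, and \cite{CP15} extends it by commensurability invariance). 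Neither task is touched by, or reachable from, an emptiness construction: you would need existence and aperiodicity results, not obstructions to tiling.
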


In both cases, the ``if'' direction is proven and the ``only if'' direction is open.  

\section*{Acknowledgements}
The first author would like to thank Pascal Vanier and Emmanuel Jeandel for providing access to a preprint of \cite{JV19} and help in understanding \cite{CG14}. We are grateful to an anonymous referee for many helpful remarks on a previous version.

\bibliographystyle{plain}
\bibliography{biblio}

\end{document}